\begin{document}

% define theorem environments
\newtheorem{theorem}{Theorem}    %[section]
\newtheorem{proposition}[theorem]{Proposition}
\newtheorem{conjecture}[theorem]{Conjecture}
\def\theconjecture{\unskip}
\newtheorem{corollary}[theorem]{Corollary}
\newtheorem{lemma}[theorem]{Lemma}
\newtheorem{sublemma}[theorem]{Sublemma}
\newtheorem{observation}[theorem]{Observation}
\theoremstyle{definition}
\newtheorem{definition}{Definition}
\newtheorem{notation}[definition]{Notation}
\newtheorem{remark}[definition]{Remark}
\newtheorem{question}[definition]{Question}
\newtheorem{questions}[definition]{Questions}
\newtheorem{example}[definition]{Example}
\newtheorem{problem}[definition]{Problem}
\newtheorem{exercise}[definition]{Exercise}

\numberwithin{theorem}{section}
\numberwithin{definition}{section}
\numberwithin{equation}{section}

\def\earrow{{\mathbf e}}
\def\rarrow{{\mathbf r}}
\def\uarrow{{\mathbf u}}
\def\varrow{{\mathbf V}}
\def\tpar{T_{\rm par}}
\def\apar{A_{\rm par}}

\def\reals{{\mathbb R}}
\def\torus{{\mathbb T}}
\def\heis{{\mathbb H}}
\def\integers{{\mathbb Z}}
\def\naturals{{\mathbb N}}
\def\complex{{\mathbb C}\/}
\def\distance{\operatorname{distance}\,}
\def\support{\operatorname{support}\,}
\def\dist{\operatorname{dist}\,}
\def\Span{\operatorname{span}\,}
\def\degree{\operatorname{degree}\,}
\def\kernel{\operatorname{nullspace}\,}
\def\dim{\operatorname{dim}\,}
\def\codim{\operatorname{codim}}
\def\trace{\operatorname{trace\,}}
\def\Span{\operatorname{span}\,}
\def\dimension{\operatorname{dim}\,}
\def\codimension{\operatorname{codimension}\,}
\def\nullspace{\scriptk}
\def\ZZ{ {\mathbb Z} }
\def\p{\partial}
\def\rp{{ ^{-1} }}
\def\Re{\operatorname{Re\,} }
\def\Im{\operatorname{Im\,} }
\def\ov{\overline}
\def\eps{\varepsilon}
\def\lt{L^2}
\def\diver{\operatorname{div}}
\def\curl{\operatorname{curl}}
\def\etta{\eta}
\newcommand{\norm}[1]{ \|  #1 \|}
\def\expect{\mathbb E}

\newcommand{\Norm}[1]{ \left\|  #1 \right\| }
\newcommand{\set}[1]{ \left\{ #1 \right\} }
\def\one{\mathbf 1}
\def\whole{\mathbf V}
\def\snarl{\mathfrak S}
\newcommand{\modulo}[2]{[#1]_{#2}}

\def\scriptf{{\mathcal F}}
\def\scriptg{{\mathcal G}}
\def\scriptm{{\mathcal M}}
\def\scriptb{{\mathcal B}}
\def\scriptc{{\mathcal C}}
\def\scriptt{{\mathcal T}}
\def\scripti{{\mathcal I}}
\def\scripte{{\mathcal E}}
\def\scriptv{{\mathcal V}}
\def\scriptw{{\mathcal W}}
\def\scriptu{{\mathcal U}}
\def\scriptS{{\mathcal S}}
\def\scripta{{\mathcal A}}
\def\scriptr{{\mathcal R}}
\def\scripto{{\mathcal O}}
\def\scripth{{\mathcal H}}
\def\scriptd{{\mathcal D}}
\def\scriptl{{\mathcal L}}
\def\scriptn{{\mathcal N}}
\def\scriptp{{\mathcal P}}
\def\scriptk{{\mathcal K}}
\def\frakv{{\mathfrak V}}

\begin{comment}
\def\scriptx{{\mathcal X}}
\def\scriptj{{\mathcal J}}
\def\scriptr{{\mathcal R}}
\def\scriptS{{\mathcal S}}
\def\scripta{{\mathcal A}}
\def\scriptk{{\mathcal K}}
\def\scriptp{{\mathcal P}}
\def\frakg{{\mathfrak g}}
\def\frakG{{\mathfrak G}}
\def\boldn{\mathbf N}
\end{comment}

\author{Michael Christ}
\address{
        Michael Christ\\
        Department of Mathematics\\
        University of California \\
        Berkeley, CA 94720-3840, USA}
\email{mchrist@math.berkeley.edu}
\thanks{The author was supported in part by NSF grant
DMS-0901569.} 
%Any opinions, findings, and conclusions
%or recommendations expressed in this paper are those of the author
%and do not necessarily reflect the views of the National Science Foundation.

\date{May 27, 2010.}

\title[ Multilinear Oscillatory Integrals]
{Multilinear Oscillatory Integrals 
\\ Via Reduction of Dimension}

\begin{abstract}
Dimensional restrictions in a theorem of Christ, Li, Tao, and Thiele
on multilinear oscillatory integral forms can be relaxed.
\end{abstract}

\maketitle
%{\Small \tableofcontents}

\section{Introduction}

%{\bf\color{blue} Could a degenerate phase become nondegenerate?}

By a multilinear oscillatory integral we mean a complex scalar-valued multilinear 
form $(f_1,\cdots,f_{n})\mapsto I(P;f_1,\cdots,f_{n})$ 
defined by an integral expression
\begin{equation}
I(P;f_1,\cdots,f_{n}) = \int_{\reals^m} e^{iP(x)}\prod_{j=1}^{n} f_j(\pi_j(x))\,dx.
\end{equation}
This expression involves parameters
$m,n,(\kappa_1,\cdots,\kappa_{n}),(\pi_1,\cdots,\pi_{n})$
where $m$ is the ambient dimension,
$\pi_j:\reals^m\to\reals^{\kappa_j}$ are surjective linear transformations,
and $1\le \kappa_j\le m-1$.
Each function $f_j$ is assumed to belong to $L^\infty(\reals^{\kappa_j})$,
and to have support in a specified compact set $B_j\subset\reals^{\kappa_j}$.
Here $n\ge 2$, $m\ge 2$. 
The phase function $P$ will always be assumed to be a real-valued polynomial.

In this note we continue the study, initiated in \cite{cltt}, of inequalities
of the form
\begin{equation} \label{lambdadecay}
|I(\lambda P;f_1,\cdots,f_{n})|\le C(1+|\lambda|)^{-\rho}\prod_j\norm{f_j}_{L^\infty},
\end{equation}
where $\lambda\in\reals$ is arbitrary, while $C,\rho\in\reals^+$
are constants which are permitted to depend on $P$ and on the supports of $\{f_j\}$.
In the ``linear'' case $n=2$, there is an extensive literature concerning
such inequalities, typically phrased in terms
of $\prod_j\norm{f_j}_{L^{p_j}}$ for more general exponents $p_j$. See for instance
\cite{stein} for an introduction.
Much less is known concerning the multilinear case $n\ge 3$.

A central notion, investigated in \cite{cltt} and \cite{sublevel}, is
that of nondegeneracy of the phase.
A polynomial $P$ is said to be degenerate relative to $\{\pi_j\}$
if $P$ can be decomposed as $\sum_j Q_j\circ\pi_j$,
for some polynomials $Q_j$. Various forms of this condition
are equivalent; in particular, if $P$ has degree $D$,
then $P$ admits a decomposition $P=\sum_j \pi_j^*(h_j)$
where $h_j$ are distributions on $\reals^{\kappa_j}$ and $\pi_j^*$
is the natural pull back operation,
if and only if $P$ admits a decomposition $P=\sum_j Q_j\circ\pi_j$
where each $Q_j$ is a polynomial of degree $\le D$.
$P$ is said to be nondegenerate, relative to $\{\pi_j\}$,
if it is not degenerate.

Whenever $\pi_j,\tilde\pi_j$ are surjective mappings
with identical nullspaces and with ranges of equal dimensions,
$\tilde\pi_j=L\circ\pi_j$ for some linear transformation $L$.
Therefore nondegeneracy is a property only
of the collection of subspaces $\scriptv_j=\kernel(\pi_j)$,
rather than of the mappings $\pi_j$, so we may equivalently speak
of nondegeneracy relative to a collection of subspaces $\{\scriptv_j\}$.

Let $D\ge 1$ be a positive integer,
and fix $\{\scriptv_j=\kernel(\pi_j)\}$.
The vector space of all degenerate polynomials $P:\reals^m\to\reals$
of degree $\le D$
is a subspace $\scriptp_{\text{degen}}$
of the vector space $\scriptp(D)$ of all polynomials $P:\reals^m\to\reals$
of degree $\le D$.
Denote the quotient space by 
$\scriptp(D)/\scriptp_{\text{degen}}$,
by $[P]$ the equivalence class of $P$ in 
$\scriptp(D)/\scriptp_{\text{degen}}$,
and by
$\norm{\cdot}_{\text{ND}}$
some fixed choice of norm for the quotient space.

A family of subspaces $\scriptv_j\subset\reals^m$
of codimensions $\kappa_j$
is said to have the {\em uniform power decay} property
if for each degree $D$
there exists an exponent $\gamma>0$
such that for any linear mappings $\pi_j$
with nullspaces equal to $\scriptv_j$,
and for any collection of bounded subsets $B_j\subset\reals^{\kappa_j}$,
there exists $C<\infty$ such that
whenever each $f_j$ is supported in $B_j$,
\begin{equation} \label{decaydef1}
|I(P;f_1,\cdots,f_{n})|\le C\norm{P}_{\text{ND}}^{-\gamma} 
\prod_{j=1}^{n}\norm{f_j}_{L^\infty}.
\end{equation}

Certain variations on this definition are also natural. One can
consider only one-parameter families of polynomials
$\{\lambda P_0: \lambda\in\reals\}$, where $P_0$ remains fixed.
One might allow the exponent $\gamma$ to depend on the supports $B_j$;
this would be a more natural hypothesis in an extension to nonpolynomial $C^\infty$
phases $P$. The case of polynomial phases $P$,
with bounds which depend only on $\norm{P}_{\text{ND}}$, is 
fundamental, so we restrict to this case in this paper. For polynomial phases, the methods 
of \cite{cltt} and of this paper show that $\gamma$ can be taken to be independent of $\{B_j\}$.

The uniform decay property is defined in the same way, with
$\norm{P}_{\text{ND}}^{-\gamma}$
replaced by
$\Theta(\norm{P}_{\text{ND}})$
for some function satisfying $\Theta(R)\to 0$ as $R\to\infty$.
%Both properties depend only on the family of nullspaces of
%the mappings $\pi_j$; replacing each $\pi_j$ by another mapping with
%the same nullspace and range merely changes the constants $C$ 
%in the inequality \eqref{decaydef1}.
% \eqref{decaydef2}.
Nondegeneracy is a necessary condition even for 
a yet weaker form of the decay property \cite{cltt}.
%in which one considers only polynomials $P=\lambda P_0$ 
%where $\lambda\in \reals$ and $|\lambda|\to\infty$.
No other necessary conditions are known to this author.

In the nonsingular case in which the mapping $\reals^m\owns x\mapsto(\pi_j(x))_{j=1}^{n}
\in\times_{j=1}^{n}\reals^{\kappa_j}$ is bijective,
it has been shown by Phong and Stein that
$P$ is nondegenerate relative to $\{\kernel(\pi_j)\}$
if and only if \eqref{lambdadecay} holds;
in that case, nondegeneracy admits a simple characterization in terms
of nonvanishing of some mixed partial derivative of $P$.
The singular case, where this embedding is not bijective, 
is the object of our investigation.
As is explained in \cite{sublevel}, the singular situation only genuinely
arises for $n\ge 3$.

It was shown in \cite{cltt} that the uniform power decay property holds in two primary cases:
firstly, when $\kappa_j=m-1$ for all $j$, and secondly,
when $\kappa_j=1$ for all $j$ and $n<2m$, provided in this second case
that $\{\kernel(\pi_j)\}$ is in general position. 
It was subsequently proved in \cite{sublevel}
that certain uniform upper bounds for measures of sublevel sets,
bounds which would be implied by the uniform decay property,
are valid for all $\{\pi_j\}$, subject only to the hypothesis that it is possible
to choose coordinates in $\reals^m$ and in all $\reals^{\kappa_j}$
in which all $\pi_j$ are represented by matrices with rational entries.
In that result the rate of decay proved to hold was not
of the form of a negative power of $\norm{P}_{\text{ND}}$, but merely some slowly
decaying function; the proof relied on a strong form of Szemer\'edi's theorem.

This note extends the second result of \cite{cltt}
to more general codimensions. 
\begin{theorem} \label{thm:reduction}
If a finite family of subspaces
$\{\scriptv_\alpha\}$ of $\reals^m$ of codimensions $\kappa_\alpha\in [1,m-1]$
is in general position and  satisfies
\begin{equation} \label{newhypothesis}
2\max_\beta\kappa_\beta + \sum_{\alpha}\kappa_\alpha\le 2m,
\end{equation}
then $\{\scriptv_\alpha\}$ has the uniform power decay property. 
\end{theorem}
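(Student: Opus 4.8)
The plan is to reduce the general-codimension case to the known codimension-one result of \cite{cltt} by an inductive dimension-reduction argument: slice $\reals^m$ by a generic affine subspace (or fiber a generic quotient) so that the larger-codimension subspaces $\scriptv_\alpha$ restrict to subspaces of strictly smaller codimension in a space of strictly smaller dimension, while the hypothesis \eqref{newhypothesis} is preserved. Concretely, I would pick the index $\beta$ achieving $\max_\alpha\kappa_\alpha$, choose a generic line $\ell\subset\reals^m$, and write $\reals^m$ as a fibration over $\reals^{m-1}$ with one-dimensional fibers transverse to all the $\scriptv_\alpha$; integrating first along the fibers, $I(P;f_1,\dots,f_n)$ becomes an average over the base of integrals of the same multilinear type, but now in ambient dimension $m-1$, with codimensions $\kappa_\alpha-1$ (or $\kappa_\alpha$, depending on whether the fiber lies in $\scriptv_\alpha$; generically one reduces the large ones). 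The bookkeeping point is that replacing $(m,\kappa_\beta)$ by $(m-1,\kappa_\beta-1)$ changes the left side of \eqref{newhypothesis} by at most $-2$ from the $2\max$ term and at most $-1$ from the sum, while the right side drops by $2$, so the inequality is maintained; iterating drives $\max_\alpha\kappa_\alpha$ down to $1$, where we are exactly in the regime $\sum_\alpha\kappa_\alpha = \sum_\alpha 1 = n \le 2m$ of the second theorem of \cite{cltt}.

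The key steps, in order, would be: (i) set up the slicing/fibration and the corresponding disintegration of the integral, verifying that the restricted phase $P|_{\text{fiber}+y}$ and the restricted subspaces again form an admissible multilinear oscillatory datum; (ii) show that general position is inherited by a generic restriction, and that \eqref{newhypothesis} descends as computed above; (iii) establish the quantitative heart of the induction — that nondegeneracy of $P$ relative to $\{\scriptv_\alpha\}$ in $\reals^m$ forces, for a set of fibers of controlled measure, nondegeneracy of the restricted phase relative to the restricted subspaces in $\reals^{m-1}$, with $\norm{P|_{\text{fiber}}}_{\text{ND}}$ bounded below by a positive power of $\norm{P}_{\text{ND}}$; (iv) combine the inductive power bound on the restricted integrals with the measure estimate on the ``good'' fibers and a trivial bound on the ``bad'' fibers to recover a power bound \eqref{decaydef1} in dimension $m$; (v) run the base case $\max_\alpha\kappa_\alpha=1$ by citing the second result of \cite{cltt} directly.

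The main obstacle I expect is step (iii): controlling how degeneracy can be created by restriction. A generic slice of a nondegenerate phase need not stay nondegenerate in a naive sense, so one must argue on the quotient $\scriptp(D)/\scriptp_{\text{degen}}$ that the map ``restrict to a fiber'' is, for generic fibers, bounded below in the quotient norm by a quantitative (semi-algebraic) amount depending only on $D$, $m$, and the configuration $\{\scriptv_\alpha\}$. This is where \eqref{newhypothesis} should be used in an essential way — it is precisely the dimensional budget ensuring that the generic fiber meets the subspaces in sufficiently general position that the pullback decomposition $P=\sum_\alpha Q_\alpha\circ\pi_\alpha$ cannot be ``reconstituted'' after restriction unless it was already present before. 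I would handle this by combining a Plücker/genericity argument for general position with the characterization of degeneracy already recorded in the excerpt (equivalence of the distributional and polynomial decompositions), reducing the quantitative lower bound to the non-vanishing of an explicit polynomial in the slicing parameters and invoking a \L{}ojasiewicz-type inequality to convert non-vanishing into a power bound.
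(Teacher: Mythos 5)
Your proposal takes a genuinely different route from the paper, and the route you sketch has at least two serious problems.

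First, the dimensional bookkeeping does not work. You propose to pass from $\reals^m$ to $\reals^{m-1}$ by integrating along the fibers of a generic one-dimensional direction $\ell$, claiming that ``generically one reduces the large'' codimensions. But for a generic $\ell$ one has $\ell\not\subset\scriptv_\alpha$ for \emph{every} $\alpha$, so $\pi_\alpha$ does not factor through the quotient $\reals^m/\ell$; after freezing a base point $y$ and integrating over the fiber $\{y+t\ell\}$, the factors $f_\alpha(\pi_\alpha(y+t\ell))$ all depend nontrivially on $t$, and the inner integral is not again a multilinear oscillatory form of the same type in one fewer variable. If instead you force $\ell\subset\scriptv_\alpha$ for some fixed $\alpha$, then $\pi_\alpha$ does factor through the quotient, but $\scriptv_\alpha/\ell$ still has codimension $\kappa_\alpha$ in $\reals^{m-1}$ (you lose one dimension from both the subspace and the ambient space), so the codimension you wanted to shrink does not shrink. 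The arithmetic step where you balance $-3$ on the left of \eqref{newhypothesis} against $-2$ on the right therefore rests on a change of parameters that the slicing does not actually produce.

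Second, and as you yourself flag, step (iii) — controlling $\norm{P|_{\text{fiber}}}_{\text{ND}}$ from below by a power of $\norm{P}_{\text{ND}}$ for generic fibers — is genuinely hard and is not resolved in the proposal. This is precisely the difficulty the paper is designed to avoid. The paper's induction does not reduce $m$ at all. It keeps the ambient space fixed and \emph{splits} a single high-codimension subspace $\scriptv_0$ (of codimension $\kappa_0=\max\kappa_\alpha$) into two subspaces $\scriptv_0+W'$ and $\scriptv_0+W''$ of codimensions $\kappa',\kappa''$ with $\kappa'+\kappa''=\kappa_0$, thereby passing from $n$ forms to $n+1$ forms with $\sum\kappa_\alpha$ unchanged and $\max\kappa_\alpha$ strictly smaller; iterating this ``resolution'' reaches the all-codimension-one case of \cite{cltt}. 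The analytic step slices along $W^\star=\scriptv_0$ (so $f_0$ is constant on the fibers), extracts a good slice $z$, and replaces $f_0$ by a structured product plus an error, after which the modified phase is $Q(x,y)=P(x,y)-P(x,z)$. The crucial point is that $P(x,z)$ is a function of $\pi_0(x,y)$ alone, hence degenerate, so $[Q]=[P]$ in $\scriptp(D)/\scriptp_{\text{degen}}$ with no quantitative loss. That identity is what substitutes for your step (iii): the paper never has to bound from below the nondegeneracy of a restricted phase, because it never restricts the phase — it adds a degenerate correction and re-expands in the same ambient space. Hypothesis \eqref{newhypothesis}, with its factor of $2$, is exactly what makes the linear-algebraic splitting (Lemma~\ref{lemma:constructW} and its sequel) possible in general position; it is not a budget for slicing.

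In short: your instinct that a dimension-reduction induction is the right skeleton is correct, but the quantity to reduce is $\max_\alpha\kappa_\alpha$ at fixed $m$ (by splitting one subspace into two), not $m$ itself (by taking generic one-dimensional quotients), and the \L{}ojasiewicz-type control you identify as the main obstacle is circumvented rather than proved.
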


%Two remarks:
%Our proof seems quite unlikely to yield exponents $\gamma$
%which are optimal, or even nearly optimal.
The coefficient of $2$ in \eqref{newhypothesis} is unnatural,
and the proof still applies in many cases with $2\max_\beta\kappa_\beta$
replaced by $\max_\beta\kappa_\beta$, or even a smaller quantity, but
it seems difficult to formulate a simple general result.
When all $\kappa_j=1$, the hypothesis \eqref{newhypothesis} reduces
to $n\le 2m-2$, whereas the hypothesis $n\le 2m-1$ actually suffices by \cite{cltt}.

%Here $\norm{P}_*$ denotes the norm of $P$ in the quotient space of polynomials
%of degrees $\le D$ modulo degenerate polynomials of degrees $\le D$.
It remains to define the notion of general position in this theorem. 
The following notation will be useful in that regard.
\begin{definition}
Let $\whole$ be a real vector space of some dimension $m\ge 2$.
For any index set $A$ and any $A$-tuple $(\kappa_\alpha: \alpha\in A)\in [1,m-1]^A$,
$G(\whole,A,(\kappa_\alpha: \alpha\in A))$ denotes
the manifold consisting of all $|A|$-tuples of linear subspaces
of $\whole$ of codimensions $\kappa_\alpha$.
An element of $G(\whole,A,(\kappa_\alpha: \alpha\in A))$ will be called a snarl.
\end{definition}

We will sometimes set $A=\{1,2,\cdots,n\}$ and identify $\whole$ with $\reals^m$,
and write 
$G(m,A,(\kappa_\alpha: \alpha\in A))$,
or
instead $G(m,\kappa_j: 1\le j\le n)$,
%or $G(\kappa_j: 1\le j\le n)$ when $m$ is understood from context,
to simplify notation.
$G(m,A,(\kappa_\alpha: \alpha\in A))$
is a product of standard Grassmann manifolds $G(m,\kappa_\alpha)$,
and  thus carries a natural real analytic structure.
% $\big(\scriptv_j: 1\le j\le n\big)$ is an element of $G(\kappa_j: 1\le j\le n)$. 

A precise statement of 
Theorem~\ref{thm:reduction} is that whenever $m,(\kappa_\alpha)$
satisfy \eqref{newhypothesis}, there exists an analytic 
subvariety $X\subset 
G(m,A,(\kappa_\alpha: \alpha\in A))$
of positive codimension,
such that 
every snarl in the complement of $X$
has the uniform decay property.
% $\big(\scriptv_j: 1\le j\le n)\notin X$.
We will not describe $X$ explicitly, for to do so would be prohibitively complicated,
but it is constructed in principle through a recursive procedure defined in the proof of the theorem.
However, in the special case where every subspace $\scriptv_j$ has codimension one,
an explicit definition of general position is given in Definition~\ref{defn:specialgeneral}.
Whenever we speak of general position with all $\kappa_j=1$,
it is understood that we refer to that explicit definition.

%Note that on the right-hand side of \eqref{eq:maininequality},
%the norms of the functions $f_j$ are $\lt$ norms.
%The dimensional hypothesis \eqref{eq:half} is necessary for such an $\lt$ inequality.

The proof of Theorem~\ref{thm:reduction} proceeds by induction on the
codimensions $\kappa_j$, which reduces the general case to the case
where all codimensions equal one, already treated in \cite{cltt}. 
%The assumption \eqref{newhypothesis} is used both in the basis step \cite{cltt}
%of the induction, and in our inductive step.
In a companion paper \cite{christdosilva}, a limited class of
special cases of Theorem~\ref{thm:reduction} is treated by a rather different method,
which we believe to be of interest despite its currently more restricted scope.

The symbols $C,c$ will denote constants in $(0,\infty)$, whose values are permitted
to change from one occurrence to the next. They typically depend only on $m,n$,
$\{\pi_j\}$, an upper bound for the degree of the polynomial phase $P$,
and the supports $B_j$ of $f_j$.
$\langle x\rangle$ is shorthand for $(1+|x|^2)^{1/2}$.

The author thanks Diogo Oliveira e Silva for useful corrections and comments on the exposition.

\section{An Example}
Heavy notation in the general discussion below obscures a straightforward idea,
so we discuss here a simple example, in the hope of illuminating the proof.
Consider
\[
\iint_{\reals^4} e^{iP(x_1,x_2,y_1,y_2)}f_0(x_1,y_1)f_1(x_2,y_2)f_2(x_1+x_2,y_1+y_2)
\,dx_1\,dx_2\,dy_1\,dy_2.
\]
Rewrite this as
\[
\iint\Big(\iint
e^{iP(s,u,t,-t+v)}f_0(s,t)f_1(u,-t+v)f_2(s+u,v)\,ds\,dt\Big)
\,du\,dv.
\]
The inner integral can be rewritten as
\[
\iint f_0(s,t)\cdot e^{iQ_{u,v}(s,t)}F_{1,u,v}(t)F_{2,u,v}(s+t)\,ds\,dt
=
\Big\langle e^{iQ_{u,v}}(F_{1,u,v}\circ L_1)(F_{2,u,v}\circ L_2),\, \overline{f_0} \Big\rangle
\]
where $F_{1,u,v}(t)=f_1(u,-t+v)$, $F_{2,u,v}$ has a similar expression in
terms of $f_2$, $Q_{u,v}$ is a certain polynomial in $(s,t)$,
$L_1(s,t)=t$, and $L_2(s,t)=s+t$.
%This in turn can be written as
%\[ \langle \overline{f_0},\,e^{iQ_{u,v}(s,t)}F_{1,u,v}(t)F_{2,u,v}(s+t)\rangle \]

If the $4$-fold integral is not suitably small, then
there exists $(u_0,v_0)$ for which this inner product is not suitably small.
Therefore $f_0=f_0(s,t)$ has a nonnegligible inner product with a function of
a special form, namely, a product of a function of $L_1(s,t)$, a function of $L_2(s,t)$,
and a polynomial $q(s,t)$ whose degree does not exceed that of $P$.

By an argument used in \cite{cltt} (see the derivation of \eqref{swallowingresult} below),
it suffices to analyze the case where $f_0$ is {\em equal} to such
a product. Substitute this product back into the original integral over $\reals^4$.
Then $P$ is replaced by $\tilde P = P(x_1,x_2,y_1,y_2)+q(x_1,y_1)$. As a function of $(x_1,x_2,y_1,y_2)$,
$q(x_1,y_1)$ is degenerate. Therefore $\tilde P$ belongs to the same equivalence class
as $P$.

The effect is a reduction to the case where $f_0(x_1,y_1)$
is replaced by a product of two factors, each of which depends only on
the image of $(x_1,x_2,y_1,y_2)$ under a mapping $L_j$.
The same reasoning can be applied to similarly reduce $f_1,f_2$.
There results a multilinear form involving $6$ functions  $g_\alpha(L_\alpha(x_1,x_2,y_1,y_2))$, 
where each $L_\alpha$ is a linear mapping from $\reals^4$ to $\reals^1$,
rather than to the original $\reals^2$.
The case of one-dimensional target spaces was treated in \cite{cltt}.

In \S\ref{section:resolution} we will formalize 
the concept of a resolution of a snarl, 
a sequence of moves which, in the example just presented,
transforms the given collection of three subspaces of codimension two
into a collection of six subspaces of codimension 1.
In \S\ref{section:linearalgebra}
we will prove that any snarl in general position
admits a resolution by a sequence of such moves.
Finally, in \S\ref{section:slicing}, we will carry out the analytic argument outlined
in the preceding paragraphs to demonstrate
that each move preserves the uniform power decay property.

\section{Resolution} \label{section:resolution}

\begin{comment}
% duplicates definition in Intro section
\begin{definition}
A snarl
$(\whole,A,\{\scriptv_\alpha: \alpha\in A\})$
is a finite-dimensional real vector space $\whole$ 
of positive dimension
together with a finite collection of nonzero, proper subspaces $\scriptv_\alpha\subset\whole$.
\end{definition}
\end{comment}

\begin{definition}
A splitting of a snarl 
$(\whole,A,\{\scriptv_\alpha: \alpha\in A\})$
is a snarl
$(\whole,B,\{\scriptw_\beta: \beta\in B\})$
with index set $B$ satisfying
$|B|=|A|+1$, $|A\cap B|=|A|-1$,
if $\alpha\in A\cap B$ then $\scriptw_\alpha=\scriptv_\alpha$,
and if indices $\alpha_0,\beta',\beta''$ are specified so that
$B\setminus A = \{\beta',\beta''\}$ and $A\setminus B=\{\alpha_0\}$,
then 
\begin{gather*}
\scriptw_{\beta'}\cap\scriptw_{\beta''}=\scriptv_{\alpha_0}
\\
\codim(\scriptw_{\beta'}) + \codim(\scriptw_{\beta''})
= \codim(\scriptv_{\alpha_0}).
\end{gather*}
\end{definition}

Direct consequences of the definition are
\begin{align}
&\sum_{\alpha\in A}\codim(\scriptv_\alpha)
=
\sum_{\beta\in B}\codim(\scriptw_\beta).
\\
&\max_\alpha\codim(\scriptv_\alpha)\ge
\max_\beta\codim(\scriptw_\beta).
\end{align}
Therefore if a snarl satisfies our main hypothesis \eqref{newhypothesis},
any splitting continues to satisfy that hypothesis.

If 
$(\whole,A,\{\scriptv_\alpha: \alpha\in A\})$
is a snarl with index set $A$,
then for any nonempty subset $A'\subset A$,
$\scriptv_{A'}$ is defined
to be $\cap_{\alpha\in A'} \scriptv_\alpha$.
Let
$(\whole,B,\{\scriptw_\beta: \beta\in B\})$
be a splitting of a snarl
$(\whole,A,\{\scriptv_\alpha: \alpha\in A\})$.
Let $\beta',\beta'',\alpha_0$ be the three distinguished indices
which appear in the preceding definition.
\begin{definition}
A splitting 
$(\whole,B,\{\scriptw_\beta: \beta\in B\})$
of a snarl
$(\whole,A,\{\scriptv_\alpha: \alpha\in A\})$
is transverse
if $A\setminus \{\alpha_0\}$ can be partitioned as the disjoint union of two nonempty
sets $A',A''$
such that 
\begin{gather*}
\dimension(\scriptw_{\beta'}\cap\scriptv_{A'})>0,
\\
\dimension(\scriptw_{\beta''}\cap\scriptv_{A''})>0,
\\
\whole = 
\scriptw_{\beta'} + \scriptw_{\beta''}, 
\\
\scriptw_{\beta'}  + \scriptv_{\alpha_0}
\text{ and } 
\scriptw_{\beta''} + \scriptv_{\alpha_0}
 \text{ are proper subspaces of } \whole.
%\\ \text{The family of subspaces
%$\{\scriptw_{\beta'}, \scriptw_{\beta''},
%\scriptv_{\alpha_0}\}$ is transverse.}
\end{gather*}
\end{definition}
%This last condition means, in light of the preceding one, that
%the sum of the dimensions of these three subspaces equals the dimension of $\whole$.

In \S\ref{section:slicing} we will establish:
\begin{proposition} \label{prop:induction}
Suppose that the snarl
$\snarl^\sharp$
is a transverse splitting of a snarl
$\snarl$.
If 
$\snarl^\sharp$
has the uniform power decay property,
then
so does $\snarl$.
%Likewise if $(\whole,B,\{\scriptw_\beta: \beta\in B\})$
%has the uniform decay property, then
%$(\whole,A,\{\scriptv_\alpha: \alpha\in A\})$.
%has the uniform decay property.
\end{proposition}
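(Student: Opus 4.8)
The plan is to realize the transverse splitting concretely as an integration in slices, exactly as in the example in \S2, and then to invoke the one‑dimensional‑target machinery of \cite{cltt} through the intermediate snarl $\snarl^\sharp$. Write $\snarl=(\whole,A,\{\scriptv_\alpha\})$ and $\snarl^\sharp=(\whole,B,\{\scriptw_\beta\})$, with distinguished indices $\alpha_0\in A\setminus B$ and $\beta',\beta''\in B\setminus A$, so that $\scriptw_{\beta'}\cap\scriptw_{\beta''}=\scriptv_{\alpha_0}$ and the codimensions add. Choose surjective linear maps $\pi_\alpha$ with $\kernel(\pi_\alpha)=\scriptv_\alpha$, and choose $\pi_{\beta'},\pi_{\beta''}$ with kernels $\scriptw_{\beta'},\scriptw_{\beta''}$; since $\scriptw_{\beta'}\cap\scriptw_{\beta''}=\scriptv_{\alpha_0}$ and the codimensions are additive, the pair $(\pi_{\beta'},\pi_{\beta''})$ may be taken so that $\pi_{\alpha_0}=(\pi_{\beta'},\pi_{\beta''})$ after identifying $\reals^{\kappa_{\alpha_0}}$ with $\reals^{\kappa_{\beta'}}\times\reals^{\kappa_{\beta''}}$. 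Thus a single function $f_{\alpha_0}(\pi_{\alpha_0}(x))$ of the ``hard'' factor is replaced, once we restrict attention to product functions, by $g'(\pi_{\beta'}(x))\,g''(\pi_{\beta''}(x))$, which is precisely what converts $\snarl$ into $\snarl^\sharp$.

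Next I would run the slicing/reduction argument. Suppose $|I(P;f_1,\dots,f_n)|$ is not small, say $\ge\delta\prod\norm{f_j}_\infty$. Using the transversality hypotheses, in particular $\whole=\scriptw_{\beta'}+\scriptw_{\beta''}$ and the partition $A\setminus\{\alpha_0\}=A'\sqcup A''$ with $\dim(\scriptw_{\beta'}\cap\scriptv_{A'})>0$ and $\dim(\scriptw_{\beta''}\cap\scriptv_{A''})>0$, I would split $\reals^m$ into complementary coordinates adapted to $\scriptv_{\alpha_0}$ and its complement, writing the integral as an iterated integral in an ``outer'' variable (parametrizing a transversal to $\scriptv_{\alpha_0}$) and an ``inner'' variable along $\scriptv_{\alpha_0}$. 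For a positive‑measure set of outer variables the inner integral is $\gtrsim\delta$ in size; each inner integral is an inner product of $f_{\alpha_0}$ (restricted to the slice, which is a genuine restriction because $\pi_{\alpha_0}$ is injective on the transversal direction) against a function of the special product form $e^{iQ}\,(g'\circ\text{(restriction of }\pi_{\beta'}))(g''\circ\text{(restriction of }\pi_{\beta''}))$, where $Q$ is a polynomial of degree $\le\degree P$. Hence $f_{\alpha_0}$ has nonnegligible pairing with a normalized function of that form. The conditions that $\scriptw_{\beta'}+\scriptv_{\alpha_0}$ and $\scriptw_{\beta''}+\scriptv_{\alpha_0}$ be proper guarantee that $g',g''$ genuinely depend on fewer variables, i.e.\ that this is a real reduction and not a tautology.

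The next step is the ``swallowing'' device of \cite{cltt}: from the fact that $f_{\alpha_0}$ has inner product $\gtrsim\delta$ with a normalized product $e^{iq}(g'\circ\cdots)(g''\circ\cdots)$, reduce to the case where $f_{\alpha_0}$ \emph{equals} such a product — this is the passage leading to \eqref{swallowingresult}, which lets one iterate a single decay estimate while controlling the loss in $\delta$ by a power. Substituting $f_{\alpha_0}=e^{iq(\pi_{\alpha_0}(x))}\,g'(\pi_{\beta'}(x))\,g''(\pi_{\beta''}(x))$ back into $I(P;\dots)$ replaces $P$ by $\tilde P=P+q\circ\pi_{\alpha_0}$; since $q\circ\pi_{\alpha_0}$ is degenerate relative to $\{\pi_\beta\}$, we have $\norm{\tilde P}_{\text{ND}}=\norm{P}_{\text{ND}}$ (degeneracy of a single summand changes nothing in the quotient). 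The result is exactly $I(\tilde P;f_1,\dots,\widehat{f_{\alpha_0}},\dots,f_n,g',g'')$ for the snarl $\snarl^\sharp$, to which the hypothesized uniform power decay of $\snarl^\sharp$ applies, yielding $\lesssim\norm{\tilde P}_{\text{ND}}^{-\gamma}=\norm{P}_{\text{ND}}^{-\gamma}$. Chasing the polynomial dependence of $\delta$ through the reduction gives $\delta\lesssim\norm{P}_{\text{ND}}^{-\gamma'}$ for a new positive exponent, which is the desired uniform power decay for $\snarl$.

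The main obstacle, and the part deserving genuine care, is the bookkeeping in the slicing step: one must choose the coordinate decomposition so that (i) $\pi_{\alpha_0}$ restricts to an \emph{injective} map on the inner slices, so that $f_{\alpha_0}$ truly restricts to an $L^\infty$ function on $\reals^{\kappa_{\alpha_0}}$ rather than being convolved; (ii) the restrictions of $\pi_{\beta'}$ and $\pi_{\beta''}$ to a slice have the codimensions demanded of $\scriptw_{\beta'},\scriptw_{\beta''}$ within that slice, so that the reduced form is again of the standard type with the correct target dimensions; and (iii) the supports of the restricted functions stay inside fixed compact sets uniformly in the outer variable, so that the constant $C$ in \eqref{decaydef1} for $\snarl^\sharp$ can be applied uniformly. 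The transversality conditions in the definition are exactly what make (i)–(iii) simultaneously achievable — $\whole=\scriptw_{\beta'}+\scriptw_{\beta''}$ provides the complementary splitting, the positivity of $\dim(\scriptw_{\beta'}\cap\scriptv_{A'})$ and $\dim(\scriptw_{\beta''}\cap\scriptv_{A''})$ ensures each piece of the reduced snarl still contains a nontrivial function to pair against, and the properness of $\scriptw_{\beta'}+\scriptv_{\alpha_0}$, $\scriptw_{\beta''}+\scriptv_{\alpha_0}$ keeps the reduction nontrivial — so the proof amounts to verifying that these hypotheses do their job and then citing \cite{cltt} for the swallowing lemma and the base estimate.
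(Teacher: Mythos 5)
Your high-level strategy is the same as the paper's: slice the integral, extract a ``good'' product factor from $f_{\alpha_0}$ via an inner product, swallow the remainder, substitute the product back, and apply the hypothesized uniform power decay of $\snarl^\sharp$. But two details are wrong as written, and one of them is a genuine logical gap.

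First, your inner/outer assignment is reversed. You say the inner variable runs along $\scriptv_{\alpha_0}$ and the outer parametrizes a transversal. But $\scriptv_{\alpha_0}=\kernel(\pi_{\alpha_0})$, so on an inner slice of that kind $f_{\alpha_0}\circ\pi_{\alpha_0}$ is \emph{constant}, and the inner integral cannot be an inner product against $f_{\alpha_0}$. Your own requirement (i) — that $\pi_{\alpha_0}$ be injective on the inner slices — is inconsistent with this choice and indicates what you actually meant: the outer variable should lie in $\scriptv_{\alpha_0}$ itself (the paper's $W^\star=\scriptv_0$), and the inner variable in a complementary transversal $W=W'\oplus W''$, on which $\pi_{\alpha_0}$ restricts to an isomorphism onto $\reals^{\kappa_{\alpha_0}}$. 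With that orientation the rest of the slicing step goes through.

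Second, and more seriously, the assertion that $q\circ\pi_{\alpha_0}$ is degenerate relative to $\{\pi_\beta:\beta\in B\}$ is false, and the identity $\norm{\tilde P}_{\text{ND}}=\norm{P}_{\text{ND}}$ illicitly conflates the ND norms for two different quotient spaces (one for $\snarl$, one for $\snarl^\sharp$). Since $\kernel(\pi_{\beta'})$ and $\kernel(\pi_{\beta''})$ each strictly contain $\kernel(\pi_{\alpha_0})$, a generic polynomial in $\pi_{\alpha_0}$ is \emph{not} expressible as $q'\circ\pi_{\beta'}+q''\circ\pi_{\beta''}$: with $\pi_{\alpha_0}=(\pi_{\beta'},\pi_{\beta''})$ and $q(u,v)=uv$, the pullback $q\circ\pi_{\alpha_0}$ is not a sum of a function of $u$ and a function of $v$. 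The correct comparison runs in the opposite direction. Because $\pi_{\beta'}$ and $\pi_{\beta''}$ factor \emph{through} $\pi_{\alpha_0}$, every polynomial degenerate relative to $\snarl^\sharp$ is automatically degenerate relative to $\snarl$; hence on polynomials of bounded degree the $\snarl^\sharp$ quotient norm dominates the $\snarl$ quotient norm up to a constant. Separately, $q\circ\pi_{\alpha_0}$ \emph{is} $\snarl$-degenerate (it is a function of $\pi_{\alpha_0}$, and $\alpha_0\in A$), so $[\tilde P]_\snarl=[P]_\snarl$. Combining these two facts gives $\norm{\tilde P}_{\text{ND},\snarl^\sharp}\gtrsim\norm{\tilde P}_{\text{ND},\snarl}=\norm{P}_{\text{ND},\snarl}$, which is the inequality the argument actually needs and which your version does not establish.
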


\begin{definition}
A chain of transverse splittings of a snarl
$\snarl$
% =(\whole,A,\{\scriptv_\alpha: \alpha\in A\})$
is a finite sequence of snarls
$(\snarl_k)_{k=0}^N$
%$(\whole,A_k,\{\scriptv_\alpha: \alpha\in A_k\})_{k=0}^N$
such that
$\snarl_0=\snarl$,
%$(\whole,A_{0},\{\scriptv_\alpha: \alpha\in A_{0}\}) =(\whole,A,\{\scriptv_\alpha: \alpha\in A\})$
and
$\snarl_{k+1}$
%$(\whole,A_{k+1},\{\scriptv_\alpha: \alpha\in A_{k+1}\})$
is a transverse splitting of
%$(\whole,A_{k},\{\scriptv_\alpha: \alpha\in A_k\})$
$\snarl_k$
for each $k\in\{0,1,2,\cdots,N-1\}$.
\end{definition}

\begin{definition}
A snarl $(\whole,A,\{\scriptv_\alpha: \alpha\in A\})$
one-dimensional
if for every $\alpha\in A$, $\scriptv_\alpha$ has codimension one. 
\end{definition}

\begin{definition}
A resolution 
$(\snarl_k)_{k=0}^N$
%$(\whole,A_k,\{\scriptv_\alpha: \alpha\in A_k\})_{k=0}^N$
of a snarl
$\snarl$
%$(\whole,A,\{\scriptv_\alpha: \alpha\in A\})$
is a chain of transverse splittings of
$\snarl$
% $(\whole,A,\{\scriptv_\alpha: \alpha\in A\})$
such that 
$\snarl_N$ is one-dimensional.
%$\dimension(\scriptv_\alpha)=1$ for every $\alpha\in A_N$.
%$(\whole,A_N,\{\scriptv_\alpha: \alpha\in A_N\})$
$\snarl_N$
is called the terminal element of this resolution.
%$(\whole,A,\{\scriptv_\alpha: \alpha\in A\})$.
\end{definition}

\begin{definition} \label{defn:specialgeneral}
A one-dimensional snarl 
$(\whole,A,\{\scriptv_\alpha: \alpha\in A\})$
%for which every $\scriptv_\alpha$ has codimension equal to one
is said to be in general position if 
for any index set $A'\subset A$,
$\{\scriptv_\alpha: \alpha\in A'\}$
spans a subspace of dimension $\min(|A'|, m)$.
\end{definition}

It was shown in Theorem~2.1 of \cite{cltt}
that any one-dimensional snarl 
in $\reals^m$ with index set $A$
satisfying $|A|<2m$ has the uniform power decay property,
provided that it is in general position in this sense.
Combining that theorem with Proposition~\ref{prop:induction} gives:
\begin{proposition} \label{prop:resolutionsuffices}
Let 
$\snarl=(\whole,A,\{\scriptv_\alpha: \alpha\in A\})$
be a snarl satisfying
\[
\max_{\alpha\in A}\codim(\scriptv_\alpha)
+ \sum_{\alpha\in A}\codim(\scriptv_\alpha)\le 2\dimension(\whole).
\]
Suppose that
%$(\whole,A,\{\scriptv_\alpha: \alpha\in A\})$
$\snarl$
admits a resolution with terminal element
%$(\whole,A_N,\{\scriptv_\alpha: \alpha\in A_N\})$
in general position.
Then 
%$(\whole,A,\{\scriptv_\alpha: \alpha\in A\})$
$\snarl$
has the uniform power decay property.
\end{proposition}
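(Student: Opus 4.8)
The plan is to deduce the proposition from the two results quoted immediately before it, by inducting \emph{downward} along the resolution. Write the given resolution as $(\snarl_k)_{k=0}^N$ with $\snarl_0=\snarl$ and $\snarl_N$ one-dimensional and in general position, and set $m=\dimension(\whole)$. I would handle the terminal snarl $\snarl_N$ by Theorem~2.1 of \cite{cltt}, and then apply Proposition~\ref{prop:induction} once at each link $\snarl_{k+1}\to\snarl_k$ of the chain to carry the uniform power decay property from $\snarl_N$ back down to $\snarl_0$. Since each $\snarl_{k+1}$ is by definition a \emph{transverse} splitting of $\snarl_k$, the hypothesis of Proposition~\ref{prop:induction} is met at every step; note in particular that no general position or dimension condition on the intermediate snarls is required there.

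The one point requiring a small computation is the base case: one must check that $\snarl_N$ satisfies the hypothesis $|A_N|<2m$ of Theorem~2.1 of \cite{cltt}, and this is where the inequality assumed of $\snarl$ enters. A transverse splitting is in particular a splitting, so the two displayed consequences of the definition of a splitting show that along the chain the quantity $\sum_\alpha\codim(\scriptv_\alpha)$ is invariant while $\max_\alpha\codim(\scriptv_\alpha)$ is nonincreasing; hence their sum is nonincreasing in $k$. Iterating from $k=0$ to $k=N$ and using the hypothesis on $\snarl=\snarl_0$ gives
\[
\max_{\alpha\in A_N}\codim(\scriptv_\alpha)+\sum_{\alpha\in A_N}\codim(\scriptv_\alpha)\le 2m.
\]
Because $\snarl_N$ is one-dimensional, every codimension in $\snarl_N$ equals $1$, so the left side is $1+|A_N|$, whence $|A_N|\le 2m-1<2m$, exactly as needed. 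Combined with the assumed general position of $\snarl_N$, Theorem~2.1 of \cite{cltt} then yields that $\snarl_N$ has the uniform power decay property.

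It then remains to run the downward induction on $k\in\{N,N-1,\dots,0\}$: $\snarl_N$ has the property by the previous paragraph, and if $\snarl_{k+1}$ has it then Proposition~\ref{prop:induction}, applied to the transverse splitting $\snarl_{k+1}$ of $\snarl_k$, gives it for $\snarl_k$; at $k=0$ we conclude that $\snarl=\snarl_0$ has the uniform power decay property. I do not expect a genuine obstacle in this argument — the whole analytic and linear-algebraic content sits in Proposition~\ref{prop:induction} and in Theorem~2.1 of \cite{cltt}, both of which may be invoked as black boxes here — so the only thing to be careful about is the bookkeeping in the base case that makes the terminal index set satisfy precisely the strict inequality $|A_N|<2m$.
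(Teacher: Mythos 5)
Your argument is correct and is essentially the same as the paper's, which simply combines Theorem~2.1 of \cite{cltt} with Proposition~\ref{prop:induction} by a downward induction along the chain. The only content you add is the explicit bookkeeping verifying $|A_N|<2m$ for the terminal one-dimensional snarl, which the paper leaves implicit but which follows, as you say, from the monotonicity of $\max_\alpha\codim(\scriptv_\alpha)+\sum_\alpha\codim(\scriptv_\alpha)$ under splittings.
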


\begin{comment} % April 4
This means the following.
A snarl is said to be one-dimensional if each subspace
$\scriptv_\alpha$ has dimension equal to one; its rank is the number $|A|$
of these subspaces.
Fix $\whole=\reals^m$, and fix an index set $B$. The set of all snarls
$(\reals^m,B,\{\scriptw_\beta:\beta\in B\})$
for which the dimension of each subspace $\scriptw_\beta$
equals an arbitrary fixed element of $\{1,2,\cdots,m-1\}$, 
carries naturally the structure of a compact, connected analytic manifold; 
it is simply a product of various Grassmannian manifolds, 
with one factor for each index $\beta\in B$.
The proposition asserts that there exists a real analytic subvariety $X$
of positive codimension
in the manifold of all one-dimensional snarl varieties in $\reals^m$ of 
any fixed rank $<2m$
such that any snarl
$(\whole,A,\{\scriptv_\alpha: \alpha\in A\})$
which admits a resolution 
with terminal element which does not belong to $X$, 
has the power decay property.
\end{comment}

There remains the question of the existence and abundance of snarls admitting
resolutions with the desired properties.
% We write $\kappa_{\max}$ for $\max_\alpha \kappa_\alpha$.
\begin{proposition} \label{prop:abundance}
Fix $m>1$, a finite index set  $A$, and $\{\kappa_\alpha: \alpha\in A\}$ 
satisfying  \eqref{newhypothesis}.
%\begin{equation} \label{newhypothesis} 
%\max_{\alpha}\kappa_\alpha +\sum_\alpha \kappa_\alpha\le 2m.  \end{equation}
There exists an analytic variety $X$ of positive codimension
in $G(m,(\kappa_\alpha: \alpha\in A))$,
such that any snarl $Q\notin X$
admits a resolution with terminal element in general position.
\end{proposition}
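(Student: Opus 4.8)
The plan is to prove Proposition~\ref{prop:abundance} by induction on the quantity $\sum_{\alpha\in A}(\kappa_\alpha - 1)$, which measures the ``distance'' of a snarl from being one-dimensional; when this quantity is zero the snarl is already one-dimensional and Definition~\ref{defn:specialgeneral} supplies an explicit subvariety $X$ of positive codimension whose complement consists of snarls in general position (the condition that each sub-family spans a subspace of the expected dimension is Zariski-open and nonempty). For the inductive step, I would first identify a single index $\alpha_0\in A$ with $\kappa_{\alpha_0}\ge 2$ and an admissible transverse splitting that replaces $\scriptv_{\alpha_0}$ by a pair $\scriptw_{\beta'},\scriptw_{\beta''}$ of strictly larger codimension summing to $\kappa_{\alpha_0}$; applying the inductive hypothesis to the resulting snarl $\snarl^\sharp$ (which still satisfies \eqref{newhypothesis} by the two displayed ``direct consequences'' of the splitting definition) yields a resolution of $\snarl^\sharp$, and prepending the one splitting move produces a resolution of $\snarl$.

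The technical core is therefore a linear-algebra genericity lemma: for a generic snarl $Q$ satisfying \eqref{newhypothesis}, and for a suitable choice of $\alpha_0$, one can find subspaces $\scriptw_{\beta'},\scriptw_{\beta''}$ with $\scriptw_{\beta'}\cap\scriptw_{\beta''}=\scriptv_{\alpha_0}$, $\codim\scriptw_{\beta'}+\codim\scriptw_{\beta''}=\kappa_{\alpha_0}$, and a partition $A\setminus\{\alpha_0\}=A'\sqcup A''$ such that the four transversality conditions in the definition of transverse splitting hold: $\dim(\scriptw_{\beta'}\cap\scriptv_{A'})>0$, $\dim(\scriptw_{\beta''}\cap\scriptv_{A''})>0$, $\whole=\scriptw_{\beta'}+\scriptw_{\beta''}$, and $\scriptw_{\beta'}+\scriptv_{\alpha_0}$, $\scriptw_{\beta''}+\scriptv_{\alpha_0}$ are both proper. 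The decomposition $\codim\scriptw_{\beta'}=a$, $\codim\scriptw_{\beta''}=\kappa_{\alpha_0}-a$ with $1\le a\le \kappa_{\alpha_0}-1$ should be chosen to balance the dimension counts; I would write $\scriptw_{\beta'}$ as $\scriptv_{\alpha_0}$ together with a generic complement of dimension $\kappa_{\alpha_0}-a$ inside some ambient subspace, and dually for $\scriptw_{\beta''}$, so that by construction $\scriptw_{\beta'}\cap\scriptw_{\beta''}=\scriptv_{\alpha_0}$ and the codimensions add correctly. The key point is that hypothesis \eqref{newhypothesis} gives exactly enough room: since $2\max_\beta\kappa_\beta+\sum_\alpha\kappa_\alpha\le 2m$, after splitting the new maximum codimension does not exceed $\kappa_{\alpha_0}$, and the dimension budget $m-\sum(\kappa_\alpha+\cdots)$ remains nonnegative, which is what allows the intersections $\scriptw_{\beta'}\cap\scriptv_{A'}$ and $\scriptw_{\beta''}\cap\scriptv_{A''}$ to be forced nontrivial for a generic choice. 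Each of the required conditions is a Zariski-open condition on $(\scriptw_{\beta'},\scriptw_{\beta''})$ and on $Q$, so the set of ``bad'' snarls — those for which no such splitting exists, or for which the inductively produced resolution of $\snarl^\sharp$ fails to reach a general-position terminal snarl — is contained in a proper analytic subvariety; the finitely many choices of $\alpha_0$, of $a$, and of the partition $A'\sqcup A''$ only enlarge $X$ by finitely many such subvarieties, and a finite union of proper analytic subvarieties still has positive codimension.

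I expect the main obstacle to be verifying that a \emph{single} generic choice can satisfy all four transversality conditions simultaneously together with \eqref{newhypothesis}, rather than each one separately; in particular the two ``positive-dimensional intersection'' conditions pull $\scriptw_{\beta'}$ and $\scriptw_{\beta''}$ toward being \emph{large} (small codimension), while properness of $\scriptw_{\beta'}+\scriptv_{\alpha_0}$ and $\scriptw_{\beta''}+\scriptv_{\alpha_0}$ and the requirement that their codimensions sum to $\kappa_{\alpha_0}$ constrain them; the bookkeeping that reconciles these, and the choice of which $\alpha_0$ and which split $a$ to use as a function of the multiset $\{\kappa_\alpha\}$, is where the coefficient $2$ in \eqref{newhypothesis} is genuinely consumed. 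A clean way to organize this is to carry, as part of the inductive hypothesis, a stronger ``margin'' statement — that the generic snarl admits a resolution all of whose intermediate snarls are themselves generic in their respective Grassmann products — so that at each stage one may freely invoke genericity of the current snarl; this also makes transparent that the terminal one-dimensional snarl lands in the complement of the explicit subvariety of Definition~\ref{defn:specialgeneral}.
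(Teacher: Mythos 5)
Your overall framework is the same as the paper's: you correctly reduce Proposition~\ref{prop:abundance} to a single-step genericity statement about one transverse splitting (this is the paper's Proposition~\ref{prop:atlast}), and you correctly identify a decreasing measure --- each splitting decreases $\sum_\alpha(\kappa_\alpha-1)$ by exactly one, so the recursion terminates in a one-dimensional snarl, for which Definition~\ref{defn:specialgeneral} supplies the explicit general-position variety. Your ``margin'' suggestion, carrying genericity of intermediate snarls through the induction, is in substance the same as the paper's formulation of Proposition~\ref{prop:atlast}: any positive-codimension bad variety $Y$ downstream pulls back to a positive-codimension bad variety $X$ upstream.

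The gap is that you stop precisely where the proof actually happens. The content of the paper's argument is the construction in Lemma~\ref{lemma:constructW} and the lemma that follows it in \S\ref{section:linearalgebra}: one takes $\alpha_0$ to be an index of \emph{maximal} codimension $\kappa_0=\max_j\kappa_j$, partitions the remaining indices into $S',S''$ balanced so that $|\kappa_{S'}-\kappa_{S''}|\le\kappa_0$, and then the dimension count $(m-\kappa_{S'})+(m-\kappa_{S''})\ge 3\kappa_0$ --- which is exactly where the coefficient $2$ in \eqref{newhypothesis} is spent --- yields $m-\kappa_{S'}\ge\kappa_0$ and $m-\kappa_{S''}\ge\kappa_0$. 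These bounds are what make room inside $\scriptv_{S'}$ and $\scriptv_{S''}$ for subspaces $W',W''$ of dimensions $\kappa',\kappa''$ with $\kappa'+\kappa''=\kappa_0$, mutually transverse and transverse to $\scriptv_{\alpha_0}$, and at the same time generic enough to avoid any prescribed bad variety $Y$ in the target Grassmann product. Your proposal names the four transversality conditions and correctly guesses that \eqref{newhypothesis} must reconcile them, but it does not identify the right $\alpha_0$ or the balanced partition, does not carry out the dimension counts, and explicitly flags this as ``the main obstacle'' without resolving it; since that bookkeeping is the substance of the proposition, what you have is a correct plan rather than a proof. Also, your remark about ``enlarging $X$ by finitely many subvarieties'' over the choices of $\alpha_0$, $a$, and the partition is a red herring: the paper fixes a single such choice once and for all at each degree-reduction step, and no union over strategies is ever needed.
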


Propositions~\ref{prop:resolutionsuffices} and \ref{prop:abundance}
together establish our main theorem.
%It remains to prove this final proposition.
% ??? complete proof
By a straightforward induction, Proposition~\ref{prop:abundance}
is a consequence of the following result.
\begin{proposition} \label{prop:atlast}
Let $m,n$ and an index set $A$ of cardinality $n$ be given.
Let $\{\kappa_\alpha: \alpha\in A\}$ satisfy \eqref{newhypothesis}.
There exist an index set $B=(A\setminus\{\alpha_0\})\cup\{\beta',\beta''\}$
of cardinality $n+1$
and parameters $\{\kappa_{\beta'},\kappa_{\beta''}\}$
such that $\{\kappa_\beta: \beta\in B\}$ continues to satisfy
\eqref{newhypothesis}, and
such that for any 
analytic variety $Y\subset G(m,(\kappa_\alpha: \alpha\in B))$ of positive codimension,
there exists an
analytic variety $X\subset G(m,(\kappa_\alpha: \alpha\in A))$ of positive codimension
such that any snarl  in
$G(m,(\kappa_\alpha: \alpha\in A))\setminus X$ 
admits a transverse splitting belonging to
$G(m,(\kappa_\alpha: \alpha\in B))\setminus Y$. 
\end{proposition}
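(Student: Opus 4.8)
The plan is to produce the splitting by the most direct construction available: pick one index $\alpha_0\in A$ realizing $\kappa_{\alpha_0}=\max_\alpha\kappa_\alpha$ (this is the choice that keeps $\max_\beta\kappa_\beta$ from increasing, which is what \eqref{newhypothesis} needs), and split $\scriptv_{\alpha_0}$ into two subspaces $\scriptw_{\beta'},\scriptw_{\beta''}$ of codimensions $\kappa_{\beta'},\kappa_{\beta''}$ with $\kappa_{\beta'}+\kappa_{\beta''}=\kappa_{\alpha_0}$ and $\scriptw_{\beta'}\cap\scriptw_{\beta''}=\scriptv_{\alpha_0}$. The two codimensions should be chosen as nearly equal as possible — say $\kappa_{\beta'}=\lceil\kappa_{\alpha_0}/2\rceil$, $\kappa_{\beta''}=\lfloor\kappa_{\alpha_0}/2\rfloor$ — so that $\max(\kappa_{\beta'},\kappa_{\beta''})\le\kappa_{\alpha_0}$, and in fact strictly smaller unless $\kappa_{\alpha_0}=1$; either way \eqref{newhypothesis} persists for $B$ since the total sum is unchanged and the max does not grow. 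Then I must show: (i) for a snarl $Q$ outside a suitable proper subvariety, there is a choice of $(\scriptw_{\beta'},\scriptw_{\beta''})$ making the splitting \emph{transverse} in the sense of the definition; and (ii) among such choices, one can be made to land outside the prescribed bad variety $Y\subset G(m,(\kappa_\alpha:\alpha\in B))$.

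For step (i), I would verify the four transversality conditions generically. The space of decompositions $\scriptv_{\alpha_0}=\scriptw_{\beta'}\cap\scriptw_{\beta''}$ with the prescribed codimensions is itself a Grassmannian-type fiber bundle over the point $\scriptv_{\alpha_0}$: choosing $\scriptw_{\beta'}\supset\scriptv_{\alpha_0}$ of codimension $\kappa_{\beta'}$ amounts to choosing a $\kappa_{\beta'}$-dimensional quotient of $\whole/\scriptv_{\alpha_0}$, and then $\scriptw_{\beta''}$ is forced up to a complementary choice. Each of the conditions $\whole=\scriptw_{\beta'}+\scriptw_{\beta''}$, $\scriptw_{\beta'}+\scriptv_{\alpha_0}\neq\whole$, $\scriptw_{\beta''}+\scriptv_{\alpha_0}\neq\whole$, and the existence of a partition $A\setminus\{\alpha_0\}=A'\sqcup A''$ with $\scriptw_{\beta'}\cap\scriptv_{A'}\neq 0$ and $\scriptw_{\beta''}\cap\scriptv_{A''}\neq 0$, is an analytic condition on the pair $(Q,(\scriptw_{\beta'},\scriptw_{\beta''}))$ that holds on a nonempty open set. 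The first three are automatic from dimension count: $\dim\scriptw_{\beta'}+\dim\scriptw_{\beta''}=2m-\kappa_{\alpha_0}>m$ by \eqref{newhypothesis} (indeed $\kappa_{\alpha_0}=\max_\alpha\kappa_\alpha\le m-1<m$), so generically $\scriptw_{\beta'}+\scriptw_{\beta''}=\whole$; and $\scriptw_{\beta'}+\scriptv_{\alpha_0}$ has dimension $m-\kappa_{\beta'}+\kappa_{\beta'}$... more carefully $\dim(\scriptw_{\beta'}+\scriptv_{\alpha_0})=\dim\scriptw_{\beta'}+\dim\scriptv_{\alpha_0}-\dim(\scriptw_{\beta'}\cap\scriptv_{\alpha_0})=(m-\kappa_{\beta'})+(m-\kappa_{\alpha_0})-(m-\kappa_{\alpha_0})=m-\kappa_{\beta'}<m$, so that condition is always satisfied, and likewise for $\beta''$. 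The partition condition is the substantive one: I would use general position of $Q$ to split the indices $A\setminus\{\alpha_0\}$ into two groups whose intersections $\scriptv_{A'},\scriptv_{A''}$ have complementary-enough dimension that each can be forced to meet one of $\scriptw_{\beta'},\scriptw_{\beta''}$; the hypothesis \eqref{newhypothesis} is exactly what guarantees enough room, since it bounds $\sum_\alpha\kappa_\alpha$ and hence keeps $\dim\scriptv_{A'}$ and $\dim\scriptv_{A''}$ from both collapsing to zero.

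For step (ii), the point is that the assignment $Q\mapsto$ (its transverse splitting) is not a single map but a family: the splittings realizing the transversality conditions form a nonempty Zariski-open subset of a fiber bundle over (an open subset of) $G(m,(\kappa_\alpha:\alpha\in A))$, and the forgetful map to $G(m,(\kappa_\alpha:\alpha\in B))$ is dominant (its image contains a nonempty open set, since from a generic snarl on $B$ one recovers a snarl on $A$ by the intersection operation $\scriptw_{\beta'}\cap\scriptw_{\beta''}=\scriptv_{\alpha_0}$). Dominance implies that the preimage of any positive-codimension subvariety $Y$ is a positive-codimension subvariety of the total space, and then I would take $X$ to be the union of: the complement of the open set where transversality is possible, together with the image (or closure of image) in $G(m,(\kappa_\alpha:\alpha\in A))$ of the bad locus — more precisely, the set of $Q$ \emph{every} one of whose transverse splittings lies in $Y$. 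Showing this last set has positive codimension is the heart of the matter and is where I expect the main difficulty: it requires that the fibers of the forgetful map (the set of transverse splittings of a fixed $Q$) be positive-dimensional, or at least vary, so that "$Y$ swallows the whole fiber" is itself a codimension-$\ge 1$ condition on $Q$. This should follow from the fact that for fixed $\scriptv_{\alpha_0}$ there is a genuine moduli of pairs $(\scriptw_{\beta'},\scriptw_{\beta''})$ (the choice of the $\kappa_{\beta'}$-dimensional quotient of $\whole/\scriptv_{\alpha_0}$ is a nontrivial Grassmannian whenever $0<\kappa_{\beta'}<\kappa_{\alpha_0}$, and when $\kappa_{\alpha_0}=1$ there is nothing to prove because $\scriptw_{\beta'}=\scriptw_{\beta''}=\scriptv_{\alpha_0}$ and the snarl is already one step closer to one-dimensional in the relevant sense), combined with a standard dimension-counting argument for families of varieties: a positive-codimension condition imposed on a positive-dimensional fiber, varying analytically, cannot be satisfied on the entire fiber except over a positive-codimension base locus.
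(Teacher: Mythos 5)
Your high-level strategy -- split the maximal-codimension subspace $\scriptv_{\alpha_0}$ into $\scriptw_{\beta'},\scriptw_{\beta''}$, verify transversality generically, and handle the avoidance of $Y$ by a dominance/dimension-counting argument -- is in the same spirit as the paper's, but there is a genuine gap in how you propose to satisfy the transversality conditions, and it is precisely the step you yourself flag as ``substantive.'' You want to choose $\scriptw_{\beta'},\scriptw_{\beta''}$ abstractly (as two generic supspaces containing $\scriptv_{\alpha_0}$, with balanced codimensions) and then check a posteriori that for some partition $A'\sqcup A''$ one has $\dim(\scriptw_{\beta'}\cap\scriptv_{A'})>0$ and $\dim(\scriptw_{\beta''}\cap\scriptv_{A''})>0$. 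But the paper's construction goes the other way and this is not a cosmetic choice: it first fixes a balanced partition $S'\sqcup S''$ of $A\setminus\{\alpha_0\}$ with $|\kappa_{S'}-\kappa_{S''}|\le\kappa_{\alpha_0}$, and then \emph{forces} the positive-intersection condition by building the complementary pieces inside the relevant intersections, taking $W'\subset\scriptv_{S'}$ and $W''\subset\scriptv_{S''}$ with $\dim W'+\dim W''=\kappa_{\alpha_0}$, and setting $\scriptw_{\beta'}=\scriptv_{\alpha_0}+W''$, $\scriptw_{\beta''}=\scriptv_{\alpha_0}+W'$. Then $W'\subset\scriptw_{\beta''}\cap\scriptv_{S'}$ and $W''\subset\scriptw_{\beta'}\cap\scriptv_{S''}$ give the positive intersections by construction. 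Making this work is exactly where the full strength of \eqref{newhypothesis} enters: the balance $|\kappa_{S'}-\kappa_{S''}|\le\kappa_{\alpha_0}$ plus \eqref{newhypothesis} yield $(m-\kappa_{S'})+(m-\kappa_{S''})\ge 3\kappa_{\alpha_0}$ and hence $m-\kappa_{S'}\ge\kappa_{\alpha_0}$ and $m-\kappa_{S''}\ge\kappa_{\alpha_0}$, which is what guarantees that $\scriptv_{S'}$ and $\scriptv_{S''}$ are large enough to contain $W'$ and $W''$ of the required dimensions in general position to $\scriptv_{\alpha_0}$ and to each other. Your proposal does not perform this accounting, and without it there is no reason for your abstractly chosen $\scriptw_{\beta'},\scriptw_{\beta''}$ to admit a working partition.

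On the avoidance of $Y$, your route (set up the incidence variety, argue the forgetful map to $G(m,(\kappa_\beta:\beta\in B))$ is dominant, then take $X$ to be the locus over which the whole fiber of transverse splittings lies in $Y$) could in principle be carried out, but it requires a proof that this bad locus is a proper analytic subvariety, which you correctly identify as the hard point and do not supply. The paper sidesteps this entirely by producing an explicit \emph{algebraic section}: fix once and for all transverse subspaces $U',U''$ and define $W'(\snarl)=U'\cap\scriptv_{S'}$, $W''(\snarl)=U''\cap\scriptv_{S''}$, so that $\snarl\mapsto\snarl^\sharp$ is a concrete analytic map, and $X$ is automatically the analytic variety of snarls where this map either fails to produce a transverse splitting or lands in $Y$. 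Nonemptiness of the complement then reduces to exhibiting one snarl for which the construction succeeds and avoids $Y$, which the paper establishes by running the construction in reverse from a generic element of $G(m,(\kappa_\beta:\beta\in B))$. This is a cleaner and more self-contained way to convert ``generic choices work'' into ``$X$ has positive codimension'' than the general-position dimension count you gesture at.
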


A defect of our theory is that
the variety $X$ in Proposition~\ref{prop:abundance} has been defined not explicitly,
but only by a rather complicated recursive procedure.
However, Proposition~\ref{prop:resolutionsuffices}
can be applied directly to any snarl for which a resolution can be found.

\section{Proof of Proposition~\ref{prop:atlast}} \label{section:linearalgebra}

Identify $A$ with $\{0,1,\cdots,n-1\}$
in such a way that $\kappa_{0}=\max_j\kappa_j$.
%We begin discussion of the general case with a proof of the existence of a suitable analogue
%of the coordinate system $(s,t,u,v)$ used in the example above. 
% ?? use this sentence somewhere?
Partition the set of indices $\{1,2,\cdots,n-1\}$
into $2$ nonempty disjoint subsets, $S',S''$.
Consider 
\begin{alignat*}{2}
&\scriptv_{S'}=\cap_{j\in S'}\scriptv_j
\qquad\qquad
&&
\scriptv_{S''}=\cap_{j\in S''}\scriptv_j
\\
&\kappa_{S'}=\sum_{j\in S'}\kappa_j
&&
\kappa_{S''}=\sum_{j\in S''}\kappa_j.
\end{alignat*}
Choose this partition so that
$|\kappa_{S'}-\kappa_{S''}|\le\kappa_0$, which is possible because
$\kappa_0\ge\kappa_j$ for all $j$. 

\begin{lemma} \label{lemma:constructW}
Suppose that $\sum_{j=0}^{n-1}\kappa_j<2m$,
and that $\max_j\kappa_j>1$.
There exist integers 
$\kappa',\kappa''\in \{1,2,\cdots,\kappa_0-1\}$,
depending only on $m$ and on $\{\kappa_j: 0\le j<n\}$ and
satisfying $\kappa'+\kappa''=\kappa_0$,
together with an analytic variety $X_0\subset G(m,\kappa_j: 0\le j<n)$ of positive codimension,
such that whenever $(\scriptv_j:0\le j<n)\notin X_0$,
there exist
subspaces $W'\subset\scriptv_{S'}$ and $W''\subset\scriptv_{S''}$
of dimensions $\kappa',\kappa''$ respectively,
which satisfy
\begin{align}
&W'\cap W''=\{0\}
%\\
%&\dimension(W'+W'') =\kappa_0
\\
&(W'+W'')\cap\scriptv_0=\{0\}.
\end{align}
\end{lemma}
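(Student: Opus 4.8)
The plan is to produce $W', W''$ by a dimension count together with a genericity argument, thinking of the configuration $(\scriptv_j)$ as varying in the Grassmannian product $G(m,\kappa_j:0\le j<n)$. First I would fix the target dimensions: set $\kappa' = \lceil \kappa_0/2\rceil$ and $\kappa'' = \kappa_0 - \kappa' = \lfloor \kappa_0/2\rfloor$, so that $\kappa',\kappa''\in\{1,\dots,\kappa_0-1\}$ (here the hypothesis $\max_j\kappa_j = \kappa_0 > 1$ is exactly what guarantees both are positive), and these depend only on $\kappa_0$. The subspaces $\scriptv_{S'}$ and $\scriptv_{S''}$ have codimension at most $\kappa_{S'}$ and $\kappa_{S''}$ respectively, and at a generic configuration equality holds; the whole point of the chosen partition with $|\kappa_{S'} - \kappa_{S''}|\le\kappa_0$ is to keep these two intersection spaces roughly balanced and, crucially, large enough to contain subspaces of dimension $\kappa'$, resp.\ $\kappa''$. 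The main inequality to verify is that $\dim\scriptv_{S'} = m - \kappa_{S'} \ge \kappa'$ and similarly for $S''$; summing these two and using $\kappa' + \kappa'' = \kappa_0$ reduces exactly to $\kappa_0 + \kappa_{S'} + \kappa_{S''} = \sum_{j=0}^{n-1}\kappa_j \le 2m$ (wait --- strictly $< 2m$ by hypothesis), so both inequalities hold provided the balance condition $|\kappa_{S'} - \kappa_{S''}|\le\kappa_0$ prevents one side from being starved. I would check that the balance condition does in fact force each of $m-\kappa_{S'}\ge\kappa'$ and $m-\kappa_{S''}\ge\kappa''$ individually: if, say, $\kappa_{S'} > \kappa_{S''}$, then $2\kappa_{S'} \le \kappa_{S'} + \kappa_{S''} + \kappa_0 = \sum_j\kappa_j < 2m$ gives $\kappa_{S'} < m$, and the refined version accounting for $\kappa'\approx\kappa_0/2$ follows the same way.

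Next, the genericity step. For a \emph{generic} choice of $(\scriptv_j)$, the intersections $\scriptv_{S'}, \scriptv_{S''}$ have the expected codimensions, and generic subspaces $W'\subset\scriptv_{S'}$, $W''\subset\scriptv_{S''}$ of dimensions $\kappa',\kappa''$ will be in general linear position relative to each other and to $\scriptv_0$. Concretely, $W' + W''$ is generically of dimension $\kappa' + \kappa'' = \kappa_0$, giving $W'\cap W'' = \{0\}$; and for the second condition, $(W' + W'')\cap\scriptv_0 = \{0\}$ holds generically as long as $\kappa_0 + \dim\scriptv_0 \le m$, i.e.\ $\kappa_0 + (m - \kappa_0) = m \le m$ --- so this is precisely a borderline transversality, achievable for generic data but not for all. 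I would make this precise by exhibiting one explicit configuration $(\scriptv_j^0)$ (e.g.\ coordinate subspaces in suitably chosen coordinates, with the blocks of coordinates for $S'$ and $S''$ arranged to overlap $\scriptv_0$ complementarily) for which valid $W', W''$ exist; the set of $(\scriptv_j)$ admitting such $W', W''$ is then seen to contain a Zariski-open neighborhood, because the conditions ``$\scriptv_{S'}$ has codimension $\kappa_{S'}$'', ``there exist $W'\subset\scriptv_{S'}$, $W''\subset\scriptv_{S''}$ with the two stated properties'' are expressible by non-vanishing of certain minors / are open conditions whose defining equations do not vanish identically (witnessed by the explicit example). The complement $X_0$ is then the proper analytic subvariety where these minors vanish, and it has positive codimension precisely because the explicit example lies outside it.

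The one subtlety --- and the step I expect to be the main obstacle --- is making the ``generic $W', W''$ exist'' assertion into an \emph{analytic} (semi-analytic) condition on $(\scriptv_j)$ alone, after the auxiliary choices $W', W''$ have been eliminated. The clean way is to phrase it as: $(\scriptv_j)\notin X_0$ iff the map sending a pair $(W', W'')\in G(\scriptv_{S'},\kappa')\times G(\scriptv_{S''},\kappa'')$ to the pair of "bad" conditions has a point avoiding both bad loci; since the bad loci are proper subvarieties of those auxiliary Grassmannians whenever the ambient dimension inequalities above hold (which they do off the locus where $\scriptv_{S'}$ or $\scriptv_{S''}$ jumps in codimension), a point avoiding them always exists there --- so in fact $X_0$ can be taken to be just the locus where $\mathrm{codim}\,\scriptv_{S'} < \kappa_{S'}$ or $\mathrm{codim}\,\scriptv_{S''} < \kappa_{S''}$, i.e.\ where $\scriptv_{S'}$ or $\scriptv_{S''}$ fails to have expected dimension. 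That is manifestly a proper analytic subvariety of $G(m,\kappa_j:0\le j<n)$ (it is defined by vanishing of appropriate minors of the matrices whose rows span the $\scriptv_j$, and it is proper because transverse intersections occur), so it has positive codimension, and for $(\scriptv_j)$ outside it the desired $W', W''$ are produced by choosing them generically inside $\scriptv_{S'}$ and $\scriptv_{S''}$. I would double-check along the way that the borderline count $(W'+W'')\cap\scriptv_0=\{0\}$ really is achievable for \emph{every} configuration with $\scriptv_{S'},\scriptv_{S''}$ of expected dimension --- this uses that $\scriptv_{S'}$ and $\scriptv_{S''}$, being intersections over $S'$ and $S''$ which both exclude the index $0$, are in suitably general position with respect to $\scriptv_0$ away from a further proper subvariety, which can be absorbed into $X_0$.
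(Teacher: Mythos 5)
Your overall architecture — dimension count, genericity, defining $X_0$ as the bad locus, and making $W',W''$ a measurable (ideally analytic) function of the snarl — is the same as the paper's, which constructs $W'=U'\cap\scriptv_{S'}$ and $W''=U''\cap\scriptv_{S''}$ for fixed transverse auxiliary subspaces $U',U''$ and takes $X_0$ to be the locus where these intersections fail to have the expected properties. That part of your reasoning is sound and essentially matches the paper.

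The genuine gap is in your very first move: fixing $\kappa'=\lceil\kappa_0/2\rceil$, $\kappa''=\lfloor\kappa_0/2\rfloor$, depending only on $\kappa_0$. Under the lemma's hypothesis $\sum_j\kappa_j<2m$ (not the stronger hypothesis \eqref{newhypothesis} of the theorem) together with the balance condition $|\kappa_{S'}-\kappa_{S''}|\le\kappa_0$, one only obtains $m-\kappa_{S'}\ge 1$ and $m-\kappa_{S''}\ge 1$, \emph{not} $m-\kappa_{S'}\ge\lceil\kappa_0/2\rceil$. For instance take $m=10$, $\kappa_0=4$, $\kappa_{S'}=9$, $\kappa_{S''}=6$: then $\sum\kappa_j=19<20$ and $|\kappa_{S'}-\kappa_{S''}|=3\le 4$, but $m-\kappa_{S'}=1<2=\lceil\kappa_0/2\rceil$, so no $\kappa'$-dimensional subspace of $\scriptv_{S'}$ exists with your balanced $\kappa'$. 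Your parenthetical ``I would check that the balance condition does in fact force each of $m-\kappa_{S'}\ge\kappa'$ \dots the refined version \dots follows the same way'' is exactly where the proof would break: the argument $2\kappa_{S'}<2m$ gives $\kappa_{S'}<m$ and nothing more. What is actually needed — and what the paper implicitly does — is to choose $\kappa',\kappa''$ to satisfy $\kappa'\le m-\kappa_{S'}$ and $\kappa''\le m-\kappa_{S''}$; such a pair exists because $(m-\kappa_{S'})+(m-\kappa_{S''})>\kappa_0$ and each summand is $\ge 1$, but the choice genuinely depends on $m$ and the full tuple $\{\kappa_j\}$ (via $\kappa_{S'},\kappa_{S''}$), not only on $\kappa_0$. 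This is in fact why the lemma statement allows $\kappa',\kappa''$ to depend on $m$ and on $\{\kappa_j:0\le j<n\}$, and your assertion that they depend only on $\kappa_0$ is a symptom of the same over-rigidity. Once $\kappa',\kappa''$ are chosen to respect these dimension constraints, the rest of your genericity argument goes through.
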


If $W',W''$ satisfy these conclusions,
define $\scriptv_n=\scriptv_0+W''$ and
$\scriptv_{n+1}=\scriptv_0+W'$.
Then $(\scriptv_i: 1\le i\le n+1)$ is a transverse splitting of
$(\scriptv_j: 0\le j<n)$. 

\begin{proof}
Suppose without loss of generality that $\kappa_{S'}\ge \kappa_{S''}$.
Since $\kappa_{S'}\le \kappa_{S''}+\kappa_0$
and $\kappa_{S'}+\kappa_{S''}<2m-\kappa_0$,
$2\kappa_{S'}\le \kappa_{S'}+(\kappa_{S''}+\kappa_0)<2m$.
Therefore
$\max(\kappa_{S'},\kappa_{S''})<m$.
% and $\min(N',N'')>0$.
%so by general position, neither of $\scriptv_{S'},\scriptv_{S''}$ is contained in the other.
%Moreover
%Let $\scriptn_j=\kernel(\pi_j)$.
If $\{\scriptv_j\}$ is in general position,
\begin{align*}
&\dimension(\scriptv_{S'})
=\max(0, m-\kappa_{S'}) =m-\kappa_{S'}\ge 1
\\
&\dimension(\scriptv_{S''})=\max(0,m-\kappa_{S''}) =m-\kappa_{S''}\ge 1
\end{align*}
and since $2m-\kappa_{S'}-\kappa_{S''}-\kappa_0\ge 0$ by \eqref{newhypothesis}, 
if $(\scriptv_j)$ is in general position then
\begin{equation} \label{tritransverse}
\dimension\big(\scriptv_{S'}
+\scriptv_{S''}
+\scriptv_0\big)
=\min\big(m,m-\kappa_{S'}+m-\kappa_{S''}+m-\kappa_0\big)
=m.
\end{equation}

Furthermore,
since $\scriptv_0$ has positive codimension
and $\scriptv_{S'},\scriptv_{S''}$ have positive dimensions,
if $(\scriptv_i: 0\le i<n)$ is in general position, then
neither of $\scriptv_{S'},\scriptv_{S''}$ is contained
in $\scriptv_0$.
Moreover, $\scriptv_0$ has codimension $\kappa_0\ge 2$.
These facts, together with \eqref{tritransverse}, ensure that there exist
$\kappa',\kappa''\in[1,\kappa_0]$ satisfying $\kappa'+\kappa''=\kappa_0$,
and subspaces $W'\subset\scriptv_{S'}$ and $W''\subset\scriptv_{S''}$
of dimensions $\kappa',\kappa''$ respectively,
such that $W'\cap W''=\{0\}$ 
% $\dimension(W')+\dimension(W'')=\kappa_0$,
and 
\begin{equation} \label{tridirectsum}
W'+W''+\scriptv_0=\reals^m.
\end{equation}
Since 
\[\dimension(W')+ \dimension(W'')+\dimension(\scriptv_0)
= \kappa'+\kappa'' + (m-\kappa_0)=m,\]
\eqref{tridirectsum} is a direct sum decomposition.

% Justify claim! ??
Fix such $\kappa',\kappa''$.
Choose subspaces $U',U''\subset\reals^m$ of codimensions $m-\kappa_{S'}-\kappa'$ and
$m-\kappa_{S''}-\kappa''$ respectively, which are transverse to one another. 
To an arbitrary $\snarl=(\scriptv_j: 0\le j<n)\in G(m,\kappa_j: 0\le j<n)$
associate $W'(\snarl)=U'\cap\scriptv_{S'}$ and $W''(\snarl) = U''\cap\scriptv_{S''}$.
The set of all $\snarl$ for which they fail to do so, is
an analytic variety $X_0$ of positive codimension.
\end{proof}

The hypothesis $\sum_{j=0}^{n-1}\kappa_j<2m$ is not sufficient to ensure that
the splitting $(\scriptv_j: 1\le j\le n+1)$ lies in general position.
% general position necessarily fails generically unless \eqref{newhypothesis} holds.
Indeed,
the sum of the dimensions of 
$\scriptv_n\cap\scriptv_{S'}$
and
$\scriptv_{n+1}\cap\scriptv_{S''}$ 
is required
by the above construction to be $\ge\kappa_0$.
For $(\scriptv_i: 1\le i\le n+1)$ in general position,
these two intersections will have dimensions equal to
$m-\kappa_{S'}-\kappa_n, m-\kappa_{S''}-\kappa_{n+1} $, respectively.
Thus the construction requires
$2m-\kappa_{S'}-\kappa_{S''}-\kappa_n-\kappa_{n+1}\ge \kappa_0$.
Since $\kappa_n+\kappa_{n+1}=\kappa_0$,
this is equivalent to
$2m-\sum_{j=0}^{n-1}\kappa_j\ge \kappa_0$,
that is, to $\max_i\kappa_i+\sum_j\kappa_j\le 2m$.

% ???
%In our hypothesis \eqref{newhypothesis},
%the term $\max_i\kappa_i$ is multiplied by $2$;
%this factor of $2$ can be replaced by $1$
%if $S',S''$ can be chosen to satisfy $\kappa_{S'}=\kappa_{S''}$,
%and $2\max_i\kappa_i$ can be replaced by

\begin{lemma}
Let $(\kappa_j: 0\le j<n)$ satisfy 
$2\max_i\kappa_i+ \sum_{j}\kappa_j\le 2m$
and $\max_i\kappa_i>1$.
% \eqref{newhypothesis}.
There exist $\kappa_n,\kappa_{n+1}\in[1,\kappa_0-1]$
satisfying $\kappa_n+\kappa_{n+1}=\kappa_0$ with the following property.
For any analytic subvariety $Y\subset
G(m,\kappa_i: 1\le i\le n+1)$
of positive codimension, 
there exists an analytic subvariety $X\subset G(m,\kappa_j: 0\le j<n)$
of positive codimension,
such that if $(\scriptv_j: 0\le j<n)\notin X$,
then in the above construction, $W',W''$ can be chosen so that
$(\scriptv_1,\cdots,\scriptv_{n-1},\scriptv_0+W',\scriptv_0+W'')\notin Y$.
\end{lemma}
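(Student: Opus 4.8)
The plan is to refine the construction of Lemma~\ref{lemma:constructW} so that the subspaces $W', W''$ are chosen generically within the constraints already imposed, and then to show that for a generic snarl, this generic choice lands outside any prescribed $Y$. First I would fix $\kappa_n, \kappa_{n+1}$ as follows: since $2\max_i\kappa_i + \sum_j\kappa_j \le 2m$, the quantity $2m - \sum_{j=0}^{n-1}\kappa_j \ge \kappa_0$, which (as the discussion preceding the lemma shows) is exactly what is needed for the splitting to be capable of lying in general position. Choose $\kappa_n, \kappa_{n+1} \in [1,\kappa_0-1]$ with $\kappa_n + \kappa_{n+1} = \kappa_0$ so that the dimension counts $m - \kappa_{S'} - \kappa_{n+1}$ and $m - \kappa_{S''} - \kappa_n$ for the relevant intersections are nonnegative and consistent with the general-position dimensions; the arithmetic here mirrors the parenthetical computation already given, and the precise apportionment can be dictated by balancing these two quantities.

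Next I would reformulate the selection of $W', W''$ as a point in an auxiliary parameter space. Fix a snarl $\snarl = (\scriptv_j: 0\le j<n)$ in general position (so outside the $X_0$ of Lemma~\ref{lemma:constructW}, and also outside the locus where the various dimension formulas fail). The set of pairs $(W', W'')$ with $W'\subset\scriptv_{S'}$, $W''\subset\scriptv_{S''}$ of dimensions $\kappa', \kappa''$ satisfying $W'\cap W'' = \{0\}$ and $(W'+W'')\cap\scriptv_0 = \{0\}$ is a nonempty Zariski-open subset $\Omega(\snarl)$ of $G(\scriptv_{S'},\kappa')\times G(\scriptv_{S''},\kappa'')$ — nonempty precisely by the conclusion of Lemma~\ref{lemma:constructW}. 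The assignment $(W',W'')\mapsto (\scriptv_1,\dots,\scriptv_{n-1},\scriptv_0+W',\scriptv_0+W'')$ defines an analytic map $\Phi_\snarl : \Omega(\snarl) \to G(m,\kappa_i:1\le i\le n+1)$. The key point I must verify is that the image of $\Phi_\snarl$, for generic $\snarl$, is not contained in $Y$ — equivalently, that $\Phi_\snarl^{-1}(Y)$ is a proper subvariety of $\Omega(\snarl)$. Since $Y$ has positive codimension, it suffices to exhibit a single pair $(W',W'')\in\Omega(\snarl)$ with $\Phi_\snarl(W',W'') \notin Y$; by analyticity and irreducibility of $\Omega(\snarl)$, this then holds for all $(W',W'')$ outside a proper subvariety.

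To produce such a pair, I would argue that for a \emph{generic} snarl $\snarl$, the map $\Phi_\snarl$ (or rather the combined map in both the snarl and the $(W',W'')$ variables) is dominant onto a subvariety of $G(m,\kappa_i:1\le i\le n+1)$ that meets the general-position locus — indeed, that its image contains a snarl in general position, by the dimension bookkeeping already set up, which guarantees the target snarl can realize all the generic intersection dimensions. Then, fixing one snarl in the image that lies in general position and hence outside $Y$ (as $Y$, having positive codimension, cannot contain the full general-position locus), I pull back: the set of $\snarl$ for which \emph{no} valid $(W',W'')$ yields a point outside $Y$ is an analytic variety $X \supset X_0$, and the preceding argument shows it is proper, i.e. of positive codimension. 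The main obstacle is the last step: one must ensure that as $\snarl$ ranges over the general-position locus, the family of images $\Phi_\snarl(\Omega(\snarl))$ genuinely sweeps out enough of the target Grassmannian product to avoid any fixed positive-codimension $Y$ — this requires checking that the total parameter count (snarl parameters plus $(W',W'')$ parameters, minus the fibers) is at least the dimension of $G(m,\kappa_i:1\le i\le n+1)$, or more robustly, a direct transversality/submersion computation for the combined map, exploiting the slack $2\max_i\kappa_i + \sum\kappa_j \le 2m$ rather than merely $\sum\kappa_j < 2m$. I would carry out this dimension count explicitly and identify the submersion point by an honest choice of generic subspaces.
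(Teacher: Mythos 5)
Your framework is the right one, but you have explicitly left the central step as an unresolved ``obstacle,'' and that is exactly where the paper's argument does its work. You correctly reformulate the problem: the data $(W',W'')$ lives in a parameter space $\Omega(\snarl)$, the splitting is an analytic map $\Phi_\snarl$ into $G(m,\kappa_i:1\le i\le n+1)$, and the crux is to show that, outside a proper subvariety of snarls $\snarl$, the image of $\Phi_\snarl$ is not contained in $Y$. You then gesture at a dimension count or a transversality/submersion argument to show the combined map ``sweeps out enough'' of the target, and stop. That is the entire content of the lemma; without it there is no proof.

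Two further issues. First, your intermediate step contains an invalid deduction: you write that if the image of $\Phi_\snarl$ meets the general-position locus then, since $Y$ has positive codimension and ``cannot contain the full general-position locus,'' you can pull back. But $Y$ failing to contain all of general position does not prevent it from containing the particular portion of general position that $\Phi_\snarl(\Omega(\snarl))$ happens to reach; ``image meets GP'' plus ``$Y$ is small'' does not yield ``image is not contained in $Y$.'' Second, the lemma as stated (and as used in Proposition~\ref{prop:atlast}) requires $X$ to be an \emph{analytic subvariety}, so the selection rule $(W',W'')$ cannot be a genuinely case-by-case choice for each $\snarl$ --- it must be analytic in $\snarl$. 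The paper handles this by fixing auxiliary subspaces $U',U''$ once and defining $W'(\snarl)=U'\cap\scriptv_{S'}$, $W''(\snarl)=U''\cap\scriptv_{S''}$, which makes the locus of failure an analytic variety; you do not address the analyticity of your selection.

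The paper closes the gap you identified not by a forward dimension count but by a reversal: start from an \emph{arbitrary} $\snarl^\sharp=(\scriptv_j:1\le j\le n+1)$ in general position, define $\scriptv_0=\scriptv_n\cap\scriptv_{n+1}$ (which in general position has codimension exactly $\kappa_0$), and then use the inequalities $m-\kappa_{S'}-\kappa_n\ge\kappa_{n+1}$ and $m-\kappa_{S''}-\kappa_{n+1}\ge\kappa_n$ (which use the full strength of $2\max_i\kappa_i+\sum_j\kappa_j\le 2m$, not merely $\sum_j\kappa_j<2m$) to locate $W'\subset\scriptv_n\cap\scriptv_{S'}$ and $W''\subset\scriptv_{n+1}\cap\scriptv_{S''}$ transverse to $\scriptv_0$ and to each other, so that $(\scriptv_1,\ldots,\scriptv_{n-1},\scriptv_0+W',\scriptv_0+W'')=\snarl^\sharp$. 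This exhibits one concrete $\snarl_0\notin X$ for any given positive-codimension $Y$ (pick any $\snarl^\sharp$ in general position and outside $Y\cup Y_0$, and choose $U',U''$ to realize the corresponding $W',W''$), which is all that is needed. This constructive reversal is the idea your proposal is missing; it is substantially cleaner and more robust than the dominance/submersion computation you were hoping to carry out, and you should adopt it rather than pursuing the parameter count.
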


\begin{proof}
Choose $S',S''$ as above, so that
$|\kappa_{S'}-\kappa_{S''}|\le \kappa_0$.
Then
$(m-\kappa_{S'})+(m-\kappa_{S''})\ge 3\kappa_0$
by \eqref{newhypothesis}  and
the choice $\kappa_0=\max_{j}\kappa_j$.
Therefore $m-\kappa_{S'}$ and $m-\kappa_{S''}$ are both $\ge\kappa_0$;
it is here that the full strength of \eqref{newhypothesis} is used.
Consequently if $\kappa_n,\kappa_{n+1}\in[1,\kappa_0-1]$ 
are chosen to satisfy $\kappa_n+\kappa_{n+1}=\kappa_0$, then
\begin{gather} 
%\label{eq:allof1}
%\kappa_n+\kappa_{n+1}=\kappa_0
%\\
\label{eq:allof2}
m-\kappa_{S'}-\kappa_n\ge\kappa_{n+1}
\\
\label{eq:allof3}
m-\kappa_{S''}-\kappa_{n+1}\ge\kappa_{n}.
\end{gather}

%\medskip
%{\bf\color{red} Can't I do better?}
% ????

\medskip
Consider any 
$\snarl^\sharp=(\scriptv_j: 1\le j\le n+1)\in G(m,n+1,\kappa_1,\cdots,\kappa_{n+1})$ in general position,
where the precise meaning of general position remains to be specified.
Define $\scriptv_0=\scriptv_n\cap\scriptv_{n+1}$.
Then
$\dimension(\scriptv_n\cap\scriptv_{n+1})=\max(0,m-\kappa_n-\kappa_{n+1})=m-\kappa_0$,
so $\scriptv_0=\scriptv_n\cap\scriptv_{n+1}$ has codimension $\kappa_0$.
% =\max(0,m-\kappa_0)=m-\kappa_0$
Moreover, general position ensures that
\begin{align*}
\dimension(\scriptv_{S'}\cap\scriptv_n)&=m-\kappa_{S'}-\kappa_n
\\
\dimension(\scriptv_{S''}\cap\scriptv_{n+1})&=m-\kappa_{S''}-\kappa_{n+1}.
\end{align*}
Therefore if $\snarl^\sharp$ is in general position, 
\begin{equation*}
\dimension(\scriptv_n\cap\scriptv_{S'})
+\dimension(\scriptv_{n+1}\cap\scriptv_{S''})
+\dimension(\scriptv_n\cap\scriptv_{n+1})
\ge \kappa_n+\kappa_{n+1}+(m-\kappa_0)=m.
\end{equation*}
Since the index sets $S',S'',\{n,n+1\}$ are pairwise disjoint, 
general position then implies that
$ (\scriptv_n\cap\scriptv_{S'}) 
+(\scriptv_{n+1}\cap\scriptv_{S''})
+(\scriptv_n\cap\scriptv_{n+1})=\reals^m$.

The two subspaces
$\scriptv_n\cap\scriptv_{S'}$ 
and $\scriptv_0=\scriptv_n\cap\scriptv_{n+1}$ 
are contained in $\scriptv_n$ and have dimensions
$m-\kappa_{S'}-\kappa_n$ and $m-\kappa_0$, respectively. 
If $\{\scriptv_j: j\in S'\}$ and $\scriptv_{n+1}$
are jointly in general position relative to $\scriptv_n$,
these two subspaces will be transverse;
their sum will have dimension equal to 
\begin{multline*}
\max(m-\kappa_n, m-\kappa_{S'}-\kappa_n + m-\kappa_0)
= 2m-\kappa_{S'}-\kappa_n-\kappa_0
\\
\ge m-\kappa_0+\kappa_{n+1}
=\dimension(\scriptv_0)+\kappa_{n+1},
\end{multline*}
using \eqref{eq:allof2}.
Therefore there exists a subspace $W'\subset\scriptv_n\cap\scriptv_{S'}$
of dimension exactly $\kappa_{n+1}$, satisfying
$\dimension(W'+\scriptv_0) = \dimension(W')+\dimension(\scriptv_0)$.
Since $W',\scriptv_0$ are both contained in $\scriptv_n$ and
the sum of their dimensions equals the dimension $\kappa_{n+1}+m-\kappa_0
=m-\kappa_n$ of $\scriptv_n$, their span equals $\scriptv_n$.
For the same reasons,
there exists a subspace $W''\subset\scriptv_{n+1}\cap\scriptv_{S''}$
of dimension $\kappa_{n}$ 
which is transverse to $\scriptv_0$,
such that $W'',\scriptv_0$ together span $\scriptv_{n+1}$.
Since the three index sets $S',S'',\{0\}$ are disjoint,
general position implies that
$W',W''$ can be chosen so that $W''$ is transverse
to $W'+\scriptv_0$.
Thus $(\scriptv_j: 1\le n\le n+1)$
is a transverse splitting of $(\scriptv_j: 0\le j<n)$.

\begin{comment}
% April 7 There is a mistake here ???
\medskip\hrule\hrule\medskip
{\bf\color{red} Fixable error here.}
Likewise
\begin{multline*}
\dimension(\scriptv_n\cap\scriptv_{S'})
+\dimension(\scriptv_n\cap\scriptv_{n+1})
\\
= (m-\kappa_{S'}-\kappa_n)+(m-\kappa_0)\ge
m-\kappa_n= \dimension(\scriptv_n)
\end{multline*}
since $m-\kappa_{S'}-\kappa_0\ge 0$.
Therefore 
\[(\scriptv_n\cap\scriptv_{S'})+(\scriptv_n\cap\scriptv_{n+1})=
\scriptv_n,\]
so there exists a subspace $W'\subset\scriptv_n\cap\scriptv_{S'}$
of dimension $\kappa_n$ such that 
\[W'+\scriptv_0= (\scriptv_n\cap\scriptv_{S'})+\scriptv_0.\]
For the same reasons,
there exists a subspace $W''\subset\scriptv_{n+1}\cap\scriptv_{S''}$
of dimension $\kappa_{n+1}$ such that $W''+\scriptv_0=
(\scriptv_{n+1}\cap\scriptv_{S''})+\scriptv_0$.
Thus $(\scriptv_j: 1\le n\le n+1)$
is a transverse splitting of $(\scriptv_j: 0\le j<n)$.
\medskip\hrule\hrule\medskip
\end{comment}

Let $(\kappa_j: 0\le j<n)$ satisfy \eqref{newhypothesis}, 
and choose $\kappa_n,\kappa_{n+1}$ as above.
We have proved that there exists an analytic subvariety
$Y_0\subset G(m,\kappa_j: 1\le j\le n+1)$ of positive codimension,
such that for any $\snarl^\sharp\in G(m,\kappa_j: 1\le j\le n+1)\subset Y_0$,
there exists at least one $\snarl\in G(m,\kappa_j: 0\le j<n)$
which admits at least one transverse splitting equal to $\snarl^\sharp$.
Indeed, each mention of ``general position'' in the above discussion
can be expressed as the condition that $\snarl^\sharp$ satisfies none
of a finite set of analytic equations.
The union of the varieties defined by each of these equations
defines $Y_0$, which has positive codimension.

\begin{comment}
% rewritten May 5
so that for any analytic subvariety 
$Y\subset G(m,\kappa_j: 1\le j\le n+1)$
of positive codimension,
there exists at least one 
$\snarl^\sharp\in G(m,\kappa_j: 0\le j<n)$
which admits at least one splitting in 
$G(m,\kappa_j: 1\le j\le n+1)$ which is not in $Y$.
\end{comment}

%Let $X_0$ be the analytic subvariety of $G(m,\kappa_j: 0\le j<n)$ constructed 
%in the proof of Lemma~\ref{lemma:constructW}. 
Given any analytic subvariety 
$Y\subset G(m,\kappa_j: 1\le j\le n+1)$ of positive codimension,
set $\tilde Y=Y\cup Y_0$
and let $X$ be the set of all 
$\snarl\in G(m,\kappa_j: 0\le j<n)$ 
for which the subspaces $W'(\snarl),W''(\snarl)$ defined in the proof
of Lemma~\ref{lemma:constructW} either fail to define
a transverse splitting of $\snarl$, or define a splitting which belongs to $\tilde Y$.
Then $X$ is an analytic subvariety, for all restrictions encountered
can be expressed as analytic equations for $(\scriptv_j: 0\le j<n)$
together with the subspaces $U',U''$ used to define the functionals $W'(\cdot),W''(\cdot)$

We have shown that there exists at least one 
$\snarl_0\in G(m,\kappa_j: 0\le j<n)$
which admits some transverse splitting 
$\snarl_0^\sharp\in G(m,\kappa_j: 1\le j\le n+1)\setminus\tilde Y$. 
The subspaces $U',U''$
may be chosen so that $W'(\snarl_0),W''(\snarl_0)$ define this splitting
$\snarl_0^\sharp$. 
Then $X$ is nonempty, so $X$ has positive codimension.
\end{proof}

% ???? May 5 afternoon

%crucial are the requirement that {\em every} transverse splitting belong to $\tilde Y$,
%and the fact that any intersection of analytic varieties is an analytic variety.

\begin{comment}
% rewritten May 5
Now plainly if $\kappa_n,\kappa_{n+1}$ are fixed, the set of all elements
of $G(m,\kappa_j: 0\le j<n)$ which do not admit such a splitting is contained
in a certain analytic subvariety of 
$G(m,\kappa_j: 0\le j<n)$; the failure of each of the conditions required in the above
construction of a splitting is defined by an analytic equation, as is the condition
that any valid splitting in $G(m,\kappa_j: 1\le j\le n+1)$ belongs to $Y$.
Since $G(m,\kappa_j: 0\le j<n)$ is connected, and since this analytic subvariety
is not all of
$G(m,\kappa_j: 0\le j<n)$,
it must be a subvariety of 
$G(m,\kappa_j: 0\le j<n)$ of positive codimension.
\end{comment}

\section{The inductive step} \label{section:slicing}
We now prove Proposition~\ref{prop:induction}.
%?? April 5
Let $W',W''$ 
and $\scriptv_n=\scriptv_0+W''$, $\kappa_n=\kappa''$, 
$\scriptv_{n+1}=\scriptv_0+W'$, and $\kappa_{n+1}=\kappa'$
be as in Lemma~\ref{lemma:constructW}.
% $\kappa_n=\kappa''$ and $\kappa_{n+1}=\kappa'$.

Set $W=W'+W''$,
and $W^\star =\scriptv_0=\kernel(\pi_0)$.
$W,W^\star$ are a pair of supplementary subspaces, so 
$\reals^m=W+W^\star$ may be identified with $W\times W^\star$.
Thus an arbitrary element of $\reals^m$ can be expressed
in a unique way as $x+y$
with $x\in W$ and $y\in W^\star$; $x+y$ will henceforth be identified with $(x,y)$.

Define linear transformations $\tilde\pi_j:W\mapsto\reals^{\kappa_j}$
by \[\tilde\pi_j(x)=\pi_j(x,0).\]
For any $(x,y)$, $\pi_j(x,y) = \pi_j(x,0)+\pi_j(0,y)$,
so \[f_j(\pi_j(x,y))=f_{j,y}(\tilde\pi_j(x))\]
where \[f_{j,y}(t) = f_j(t+\pi_j(0,y)).\] 

%We will proceed by induction 
%on $\max_j\kappa_j$, and for fixed values of
%$\max_j\kappa_j$, on the number of indices
%$i$ satisfying $\kappa_i=\max_j\kappa_j$.
%The total number of indices $j$ is allowed to increase in the inductive step.
%Our goal is thus to show
%how to decrease the value of $\kappa_0$
%without increasing any other dimension $\kappa_j$.

We will use the equivalence, with the mappings $\pi_j$, sets $B_j$,
and phase function $P$ fixed, between an {\it a priori} inequality of the form 
\begin{equation}
|I(P;f_0,\cdots,f_{n-1})|\le  \scriptc\prod_{j=0}^{n-1}\norm{f_j}_\infty,
\end{equation}
and the formally stronger inequality
\begin{equation} \label{L2inequality}
|I(P;f_0,\cdots,f_{n-1})|\le \tilde\scriptc\norm{f_0}_2\prod_{j=1}^{n-1}\norm{f_j}_\infty. 
\end{equation}
If the latter holds, then the former holds with $\scriptc\le C\tilde\scriptc$.
If the former holds, then the latter follows with $\tilde\scriptc\le C\scriptc^{1/2}$,
by interpolation with the trivial inequality
\begin{equation}
|I(P;f_0,\cdots,f_{n-1})|\le \tilde C'\norm{f_0}_1\prod_{j=1}^{n-1}\norm{f_j}_\infty. 
\end{equation}
Our argument is not phrased exclusively in terms of one inequality or the other,
but uses their equivalence at each step of an induction.

Our oscillatory integral may be written as
\[
I(P;f_0,\cdots,f_{n-1})
=\int_{E}
I_y(P_y;f_{0,y},\cdots,f_{n-1,y})\,dy
\]
for some bounded subset $E\subset W^\star$,
where
\[
I_y(P_y;g_{0},\cdots,g_{n-1})
= \int e^{iP(x,y)}\prod_{j=0}^{n-1} g_j(\tilde\pi_j(x))\,dx.
\]

Note that 
\[f_{0,y}(\tilde\pi_0(x)) = f_0(\pi_0(x,0)+\pi_0(0,y)) \equiv f_0(\pi_0(x,0)).\]
$x\mapsto\pi_0(x,0)$ is a linear isomorphism of $W$ with $\reals^{\kappa_0}$.
Therefore by a linear change of variables in $\reals^{\kappa_0}$, we may arrange that 
\[\pi_0(x,0)\equiv x.\]
With this simplification,
\[
I_y(P_y;f_{0,y},\cdots,f_{n-1,y})
= \big\langle 
e^{iP(x,y)}\prod_{j=1}^{n-1} f_{j,y}(\tilde\pi_j(x)), \,
\overline{f_{0}} 
\big\rangle,
\]
where the inner product is taken with respect to $x$ for fixed $y$.
%and where $A:\reals^{\kappa_n}\leftrightarrow\reals^{\kappa_n}$ is the invertible linear
%transformation defined by $\pi_n(x,0) = A^{-1}(x)$.

Fix bounded sets $B_j\subset\reals^{\kappa_j}$,
and consider only functions $f_j$ supported in $B_j$.
Define $\Lambda=\Lambda(P,\{\pi_j\})$ to be the optimal constant in the inequality
\eqref{L2inequality}.
Let $\{f_j: 1\le j\le n-1\}$  and $f_0$ be functions satisfying $\norm{f_j}_\infty\le 1$ for $j\ge 1$,
and $\norm{f_0}_2=1$,
such that
\[
|I(P;f_0,\cdots,f_{n-1})|\ge \tfrac12 \Lambda\norm{f_0}_2.
\]

There exists $z$ such that
$\big|\big \langle 
e^{iP(x,z)}\prod_{j=1}^{n-1} f_{j,z}(\tilde\pi_j(x)), \,
\overline{f_0} 
\big\rangle\big|\ge c\Lambda$.
Decompose \[f_0(x) = a e^{-iP(x,z)}\prod_{j=1}^{n-1} h_j(\tilde\pi_j(x)) +g_0(x)\]
where 
\begin{align*}
|a|&\le C\norm{f_0}_2
\\
\norm{g_0}_{\lt(\reals^{\kappa_0})}^2&\le \norm{f_0}_2^2 - c\Lambda^2\norm{f_0}_2^2
\end{align*}
and $h_j=\overline{f_{j,z}}$.
Then
\begin{multline*}
I(P;f_0,\cdots,f_{n-1})
=
I(P;f_1,\cdots,f_{n-1},g_0)
\\
+
a\iint e^{iP(x,y)} \prod_{j=1}^{n-1}f_j(\pi_j(x,y))
\cdot e^{-iP(x,z)}
\prod_{k=1}^{n-1} h_j(\tilde\pi_j(x))
\,dx\,dy.
\end{multline*}
The second term may be written as
\[
a
\iint e^{iQ(x,y)} \prod_{j=1}^{n-1}f_j(\pi_j(x,y))
\cdot
\prod_{k=1}^{n-1} h_k(\pi_k^\sharp(x,y))
\,dx\,dy
\]
where 
$\pi_k^\sharp:\reals^m\to\reals^{\kappa_k}$
is defined by
\[\pi_k^\sharp(x,y) = \tilde\pi_k(x) =\pi_k(x,0) = \pi_k(\pi_0(x,y),0)
= \pi_k(\pi_0(x,0),0)\]
and \[Q(x,y) = P(x,y)-P(x,z).\]

Since $x=\pi_0(x,y)$, 
$(x,y)\mapsto P(x,z)$ is a polynomial function of $\pi_0(x,y)$.
Therefore 
$[Q]=[P]$,
where $[\cdot]$ denotes the equivalence class in the space
of polynomials modulo those polynomials which are degenerate
relative to $\{\pi_j: 0\le j\le n-1\}$.

Now
\begin{multline*}
\iint e^{iQ(x,y)} \prod_{j=1}^{n-1}f_j(\pi_j(x,y))
\cdot
\prod_{k=1}^{n-1} h_k(\pi_k^\sharp(x,y))
\,dx\,dy
\\
=
I\Big(Q;f_1,\cdots,f_{n-1},h_1,\cdots,h_{n-1},
\{\pi_j\}_{j=1}^{n-1},\{\pi^\sharp_k\}_{k=1}^{n-1}\Big).
\end{multline*}
This is not what we are aiming for; for instance, this expression
is $2n-2$--multilinear, while we are aiming for an $n+1$--multilinear form.

Elements $(x,0)\in W$ may be decomposed
as $(x,0)=(x',x'',0)$ where $(x',0,0)\in W'$ and $(0,x'',0)\in W''$.
Thus
$\pi_k^\sharp(x',x'',y)=\pi_k(x',x'',0)= \pi_k(x',0,0)+\pi_k(0,x'',0)$ 
depends only on $x''$ for $k\in S'$,
and 
depends only on $x'$ for $k\in S''$;
the nullspace of $\pi_k^\sharp$ contains $W''+\scriptv_0$ for each $k\in S'$.
Therefore we may write
\[
\prod_{k\in S'}h_k(\pi_k^\sharp)(x',x'',y)
= f_n\big(\pi_n(x',x'',y)\big)
\]
where $\pi_n$ is a surjective linear mapping 
from $\reals^m$ to a Euclidean space of dimension
$\kappa_n=\dimension(W'')$,
the nullspace of $\pi_n$ equals $\scriptv_0+W''=\scriptv_n$,
\[
\pi_n(x',x'',y) = \pi_k^\sharp(x',x'',y) = \pi_k^\sharp(x',x'',0)
= \pi_k^\sharp(0,x'',0),
\]
and $\norm{f_n}_\infty \le\prod_{k\in S'}\norm{h_k}_\infty\le 1$;
this can be done, albeit in an artificial way, even if the intersection of the nullspaces
of all such $\pi_k^\sharp$ has dimension strictly greater than
$m-\kappa_n$, by defining $f_n$ to be independent of one or more coordinates in $\pi_n(\reals^m)$
in a sufficiently large bounded set.
Likewise
\[
\prod_{k\in S''}h_k(\pi_k^\sharp)(x',x'',y)
= f_{n+1}\big(\pi_{n+1}(x',x'',y)\big)
\]
where $\pi_{n+1}$ is a surjective linear mapping 
with nullspace $\scriptv_{n+1}$
from $\reals^m$ to a Euclidean space of dimension
$\kappa_{n+1}=\dimension(W')$,
and $\norm{f_{n+1}}_\infty\le 1$.
With these definitions,
\begin{multline*}
\iint e^{iQ(x,y)} \prod_{j=1}^{n-1}f_j(\pi_j(x,y))
\cdot
\prod_{k=1}^{n-1} h_k(\pi_k^\sharp(x))
\,dx\,dy
=
I\big(Q;f_1,\cdots,f_{n+1},
\{\pi_j\}_{j=1}^{n+1}\big).
\end{multline*}
%Each $\pi_j$ is a surjective linear mapping from $\reals^m$ to $\reals^{\kappa_j}$,
%and 
%Since $\kappa_n,\kappa_{n+1}$ do not exceed $\kappa_0$,
%\[ \textstyle\max_{i=1}^{n+1}\kappa_i +
%\sum_{j=1}^{n+1}\kappa_j \le \textstyle\max_{i=0}^{n-1}\kappa_i
%+ \sum_{j=0}^{n-1}\kappa_j \le 2m.  \]
$\norm{f_j}_\infty\le 1$ for all $j\in\{1,2,\cdots,n+1\}$,
and $f_i$ is supported in a bounded subset of $\reals^{\kappa_i}$
which depends only on $\{B_j: 0\le j\le n-1\}$,
on $\{\pi_j: 0\le j\le n-1\}$,
on the choices of $S',S''$, and on the choice of $W$.

Now $Q$ is 
nondegenerate\footnote{In fact, $Q$
is nondegenerate  relative to $\{\pi_j\}_{j=1}^{n+1}$,
if and only if $P$ is nondegenerate 
relative to $\{\pi_j\}_{j=0}^{n-1}$.}
relative to $\{\pi_j\}_{j=1}^{n+1}$,
because $Q$ is 
nondegenerate relative to $\{\pi_j\}_{j=0}^{n-1}$
and the projections $\pi_n,\pi_{n+1}$ both factor through $\pi_0$.
The norm of $Q$ in the quotient space of polynomials
modulo sums of polynomials $q\circ\pi_j$ with $1\le j\le n+1$
is at least as large as the norm of $P$ in the 
quotient space of polynomials modulo $q\circ\pi_j$ with $0\le j\le n-1$,
up to a constant factor which depends only on choices of norms for these spaces.

We are reasoning under the induction hypothesis that
for any collection of bounded subsets $B_j\subset \reals^{\kappa_j}$,
there exist $C<\infty$ and an exponent $\gamma>0$ such that 
for all continuous functions $f_j$ supported in $B_j$ respectively,
\begin{equation}
\big| I\big(Q;f_1,\cdots,f_{n+1},
\{\pi_j\}_{j=1}^{n+1}\big)\big|
\le C\langle
\norm{Q}_{\text{ND}}
\rangle^{-\gamma}
\prod_{j=1}^{n+1}\norm{f_j}_\infty
\le C\langle
\norm{P}_{\text{ND}}
\rangle^{-\gamma}.
\end{equation}
$C,\gamma$
depend on $\{\pi_j: 1\le j\le n+1\}$ and on $\{B_j\}$,
which in turn depend on $\{\pi_j: 0\le j\le n-1\}$
and on the designation of bounded subsets on which
the functions $f_j$ are supported for all $j\in\{0,1,\cdots,n-1\}$.

Therefore whenever $\{f_j\}$ are continuous functions
supported in $B_j\subset\reals^{\kappa_j}$,
satisfying $\norm{f_0}_2=1$
and $\norm{f_j}_\infty\le 1$ for all $j\in\{1,2,\cdots,n-1\}$,
\begin{align*}
% \Lambda\le
|I(f_0,\cdots,f_{n-1},&\{\pi_j\}_{j=0}^{n-1})|
\\
&\le
|I(g_0,f_1,\cdots,f_{n-1},\{\pi_j\}_{j=0}^{n-1})|
+C 
\big| I\big(Q;f_1,\cdots,f_{n+1},
\{\pi_j\}_{j=1}^{n+1}\big)\big|
\\
&\le
\Lambda\norm{g_0}_2
+ 
C\langle
\norm{P}_{\text{ND}}
\rangle^{-\gamma}
\\
&\le
\Lambda \norm{f_0}_2(1-c\Lambda^2)
+ 
C\langle
\norm{P}_{\text{ND}}
\rangle^{-\gamma}
\\
&\le
\Lambda (1-c\Lambda^2)
+ 
C\langle
\norm{P}_{\text{ND}}
\rangle^{-\gamma}.
\end{align*}

By taking the supremum over all $f_0,\cdots,f_{n-1}$
which are supported in the sets $B_j$ and satisfy
$\norm{f_0}_2\le 1$ and $\norm{f_j}_\infty=1$
for all $j\ge 1$, we conclude that
\begin{equation*}
\Lambda\le
\Lambda (1-c\Lambda^2)
+ 
C\langle
\norm{P}_{\text{ND}}
\rangle^{-\gamma}.
\end{equation*}
Subtracting $\Lambda$ from both sides and rearranging yields
\begin{equation} \label{swallowingresult}
\Lambda^3
\le C\langle
\norm{P}_{\text{ND}}
\rangle^{-\gamma}.
\end{equation}
This completes the proof of Proposition~\ref{prop:induction},
hence of Theorem~\ref{thm:reduction}.

\end{document}